\newtheorem{theorem}{Theorem}[section]
\begin{document}

{\LARGE \bf  
\begin{center}
Oscillations and pattern formation in\\ a slow-fast prey-predator system
\end{center}
}


\vspace*{1cm}

\centerline{\bf Pranali Roy Chowdhury$^1$, Sergei Petrovskii$^{2,3}$, Malay Banerjee$^1$}

\vspace{0.5cm}

\centerline{ $^1$ Indian Institute of Technology Kanpur, Kanpur - 208016, India}

\centerline{ $^2$ School of Mathematics \& Actuarial Science, University of Leicester, Leicester LE1 7RH, UK}

\centerline{ $^3$ Peoples Friendship University of Russia (RUDN University), 6 Miklukho-Maklaya St,} 
\centerline{ Moscow 117198, Russian Federation}

\vspace{1cm}

\noindent
\begin{abstract}
	We consider the properties of a slow-fast prey-predator system in time and space. We first argue that the simplicity of prey-predator system is apparent rather than real and there are still many of its hidden properties that have been poorly studied or overlooked altogether. We further focus on the case where, in the slow-fast system, the prey growth is affected by a weak Allee effect. We first consider this system in the non-spatial case and make its comprehensive study using a variety of mathematical techniques. In particular, we show that the interplay between the Allee effect and the existence of multiple timescales may lead to a regime shift where small-amplitude oscillations in the population abundances abruptly change to large-amplitude oscillations. 
	We then consider the spatially explicit slow-fast prey-predator system and reveal the effect of different time scales on the pattern formation. We show that a decrease in the timescale ratio may lead to another regime shift where the spatiotemporal pattern becomes spatially correlated leading to large-amplitude oscillations in spatially average population densities and potential species extinction. 
	
	\vspace{0.5cm}
	
	\noindent
	{\bf Keywords:} Slow-fast time scale; relaxation oscillation; canard cycle; spatial pattern; regime shift 
	
\end{abstract}

\section{Introduction}
\label{intro}
In the natural environment, interactions in a population community are usually quite complex \cite{0b,0a,0c}. This ubiquitous complexity have several different sources such as the complexity of the wood web, nonlinearity of species feedbacks, multiplicity of temporal and spatial scales, etc.
It is extremely difficult, in fact hardly possible at all to capture the entire complexity of ecological interactions in a single mathematical model or framework. Instead, the usual means of analysis tend to focus on a particular aspect or feature of the ecological system. For instance, while the food web theory endeavours to link the properties of a realistic population community to the complexity of the corresponding food web, in particular by analysing the web connectivity and revealing the bottlenecks \cite{Allesina04,Polis96}, a lot of attention focuses on the properties of simpler `building blocks' from which the web is made \cite{Jordan02}. A variety of blocks of intermediate complexity have been considered, a few examples are given by the three-species competition system \cite{Hofbauer98}, intraguild predation \cite{Holt97} and a three-species resource-consumer-predator food chain \cite{Hastings91}. 

Arguably, the most basic block is the prey-predator system. 
It has been a focus of research for almost a century \cite{Rosenzweig63,Volterra26} and there is a tendency to think about it as a fully studied, textbook material \cite{Gyll}. However, this is far from true. The apparent mathematical simplicity of the prey-predator system (usually associated with the classical Rosenzweig-McCarthur model as a paradigm \cite{Rosenzweig63,0c}) is superficial rather than real, and there has recently been a surge of interest and an increase in mathematical modellng literature dealing with its `hidden', overlooked properties, with more than a hundred of papers published in the first quarter of 2021 alone\footnote{Data are taken from the Web of Science}. New properties readily arise as soon as one introduces relatively small (i.e.~preserving the defining structure of the model), biologically motivated changes into the paradigmatic system, e.g.~adding explicit heterogeneous space \cite{Zou20}, changing the specialist predator to a generalist one \cite{Rodrigues20,Sen20}, changing the properties of predator's functional response \cite{Arditi89,Huang14}, considering different types of density dependence in the population growth or mortality \cite{Edwards99,Jiang21}, or taking into account the fact that the intraspecific dynamics of prey and predator often occur on a very different time scale \cite{Kooi18,Poggiale20}.

While one of the generic properties of a prey-predator system is its intrinsic capability to produce sustained population cycles (due to the emergence of a stable limit cycle in a certain parameter range \cite{May72}), with many fundamental implications for the population dynamics, another equally important property is its capacity to exhibit pattern formation, in particualar due to the Turing instability \cite{Segel72,Turing52}. The latter has been a focus of many groundbreaking studies that linked the patterns observed in various biological and ecological systems to the dissipative instability in a prey-predator (or, more generically, activator-inhibitor) system, e.g.~see \cite{Gurney98,Hastings97,Jansen95,McCauley90,Mimura78,Murray68,Murray75,Murray76,Murray81,Murray82,Murray88,Rosenzweig71}, also \cite{Murray89} for an exhaustive review of earlier research. 
Other studies also discovered and considered in detail a possibility of non-Turing pattern formation, in particular due to the interplay between the Hopf bifurcation and diffusion \cite{Pascual93,Petrovskii01,Petrovskii99,Petrovskii02b,Scheffer97,Sherratt95} as well as pattern formation resulting from the Turing-Hopf bifurcation \cite{Baurmann07}.

Interestingly, in spite of the large number of modelling papers concerned with the prey-predator system, there are still a number of issues poorly investigated. One such issue is the interaction between different types of density dependence and the existence of different time scales, either in a spatial or nonspatial system. 
Indeed, one context where the prey-predation framework has been particularly successful to provide a new insight into the mechanisms of ecological interactions are is large-magnitude nearly-periodical fluctuations in population size that has been observed in many species and ecosystems. In such a case, typically, a large outbreak in population abundance is followed by a population decline, often to a small population size or density. For instance, the fluctuations in populations of \textit{snowshoe hares} and \textit{Canadian lynx} in the Canadian Boreal forest was modeled with the help of a tri-trophic food web model \cite{Stenseth97} where the population explosion of the lynx was observed every 9-11 years followed by a rapid decline in the population of hares. Also in case of plankton ecosystem within lake, the seasonal abundance of zooplankton (particularly \textit{Daphnia}) is frequently observed, which completely grazes down the algal biomass thus resulting in clear-water phases in lakes \cite{Scheffer97}. The exact causes of these fluctuations are still a debatable issue among various researchers. However, one of the common trait observed in the above examples is that the bottom level of a multi-trophic system or the basal prey has faster growth and decay compared to their consumers. On the other hand, the population of the budworm can increase several hundred fold within a span of few years whereas the leaves of adult trees do not grow at a comparable rate. This resulted in the outbreak of \textit{spruce budworm} which destroyed the \textit{balsam forest} of eastern Northern America \cite{Ludwig78}. To capture this type of rapid growth/decay for the interacting species, researchers introduced the mathematical models with slow-fast time scale. A small time scale parameter is introduced either in the prey growth equation or in the predator growth depending upon the species under consideration.

In a rather general case, the prey-predator interaction in a nonspatial system can be modeled by a system of coupled ordinary differential equations 
\begin{equation}
\begin{aligned}
u' &= uf(u)-vg(u,v),\\
v' &= evg(u,v) - m(v) v,  
\end{aligned}
\end{equation} 
where $u$ and $v$ denote the prey and predator densities, respectively, at time $t$. (A spatially explicit approach includes diffusion terms, hence turning the ODEs to PDEs, see Section \ref{sec:spatial}). Here both the species are assumed to be distributed homogeneously within their habitat. The function $f(u)$ represents the per capita growth rate of prey, $g(u,v)$ describes the prey-predator interaction and $\mu$ describes the natural death rate of predators in absence of prey, $e$ is known as the conversion efficiency. Assuming the rate of growth of prey population much faster than its predator, a time-scale parameter $0<\varepsilon\ll1$ is introduced which transform the original model to a slow-fast model as follows
\begin{equation}
\begin{aligned}
\varepsilon u' &= uf(u)-vg(u,v),\\
v' &= evg(u,v) - m(v) v.
\end{aligned}
\end{equation} 
This type of slow-fast prey predator models were first studied by Rinaldi and Muratori \cite{Rinaldi92}, so far as our knowledge goes, where the cyclic coexistence of the slow-fast limit cycle was discussed. They also analyzed the cyclic fluctuation in population densities of three species model in a slow-fast setting with one and two multiple time scale parameters. 

In mathematical literature, the slow-fast systems are considered as singularly perturbed ordinary differential equation, where $\varepsilon$ is the singular perturbation parameter. The standard stability and bifurcation analysis performed for the prey-predator models was not enough to analyze the complete dynamics exhibited by the slow-fast systems. Many mathematical techniques were developed to study this class of systems. In the late 1970s, Neil Fenichel \cite{Fenichel79} introduced a geometric approach based on the invariant manifold theory to study the singularly perturbed coupled systems, known as Geometric Singular Perturbation Theory (GSPT). Using this theory the dynamics of the full slow-fast system are studied by reducing it to sub-systems of lower dimension and thereby studying the complete dynamics of the subsystems. The application of Fenichel's theory in the context of biology was well explained by G. Hek \cite{Hek10}. But this theory fails to approximate the dynamics near the non-hyperbolic equilibrium points where the system encounters a singularity. Later in 2001, Krupa and Szmolyan \cite{Krupa01A,Krupa01B} extended Fenichel's theory to overcome the difficulty around non-hyperbolic points using the blow-up technique. This was based on the pioneering work of Dumortier \cite{Dumortier78,Dumortier93,Dumortier96}. The main idea behind this was to blow up the non-hyperbolic equilibrium points of the system by a 4-dimensional unit sphere $S^3$ and the trajectories of the blow-up system is mapped on and around the sphere. In case, the blow-up space still has non-hyperbolic points, sequence of blow-up maps can be used to desingularize the system.

Before the development of mathematical tools to study this class of systems, a Dutch Physicist Van der Pol \cite{Dumortier96,VanderPol26} observed large amplitude periodic oscillation consisting of slow and fast dynamics, which he named relaxation oscillation. These are periodic solutions consisting of slow curvilinear motion and sudden fast jumps. These types of slow-fast limit cycles were later observed in many chemical and biological systems \cite{Kooi18,Muratori89,Rinaldi92,Wang19,Wang19AML}. Another type of periodic solution observed in singularly perturbed systems are canard solutions. This was first investigated by E. Beno\^{i}t {\it et. al} \cite{Benoit81} while studying the Van der Pol Oscillator. Dumortier and Roussarie, in their seminal work \cite{Dumortier96}, analyzed this phenomenon through a geometric approach, using blow-up technique and with the help of invariant manifold theory. A canard is a solution of a singularly perturbed system which follows an attracting slow manifold, closely passing through the bifurcation point of the critical manifold, and then following a repelling slow manifold for $\mathcal{O}(1)$ time. It was observed that for the existence of canard solution Hopf bifurcation is necessary \cite{Krupa01B}. The fast transition from small stable limit cycles appearing through Hopf bifurcation to large amplitude relaxation oscillation via a sequence of canard cycles within an exponentially small range of the parameter is known as canard explosion. In real-world ecosystems this phenomenon can be related to sudden outbreak or decline of a particular species \cite{Ludwig78,Scheffer00,Scheffer97,Siteur16,Stenseth97}.

Over the last few years, several works have been done on prey-predator systems with slow-fast time scale. In \cite{Muratori89,Muratori92,Rinaldi92}, the authors have analyzed the periodic bursting of high and low-frequency oscillations in interacting population models with two and three-trophic level with slow-fast time scale. A novel 1-fast-3-slow dynamical system have been developed in \cite{Piltz17} to consider the adaptive change of diet of a predator population that switches its feeding between two prey populations. The classical Rosenzweig–MacArthur (RM) model and the Mass Balance chemostat model in the slow-fast setting is studied in \cite{Kooi18}, where the authors have shown that the RM model exhibits canard explosion in the oscillatory regime of the parameter space whereas the later model does not exhibit such phenomenon. They have used the asymptotic expansion technique to determine the canard explosion point. In \cite{Poggiale20}, the authors have used the blow-up technique to obtain an analytical expression of the bifurcation thresholds for which maximal canard solution occurs in the RM-model. The existence and uniqueness of the relaxation oscillation cycle have been studied for the Leslie-Gower model with the help of entry-exit function and GSPT in \cite{Wang19AML}. The rich and complex slow-fast dynamics of the predator-prey model with Beddington-DeAngelis functional response is studied in \cite{Saha21}. To the best of our knowledge, there is no work so far in literature considering the Allee effect in the slow-fast prey-predator model. In this paper, first we consider the slow-fast dynamics of the classical Rosenzweig–MacArthur (RM) model with multiplicative weak Allee effect in prey growth equation using GSPT and blow-up technique.

In population ecology, the Allee effect is a widely observed phenomenon especially at low population density, which describes a positive relationship between species population and per capita population growth rate of species. The main causes of Allee effect include difficulties in mate finding, inbreeding depression, cooperative defense mechanism etc. Mostly, we are concerned about the demographic Allee effect which can be classified as: strong Allee and weak Allee effect. For strong Allee effect, the per capita growth rate is negative below some critical population density (Allee threshold), and the growth rate becomes positive above that threshold. Whereas, in case of weak Allee effect, per capita growth rate is small and remains positive even at low population densities. But with the introduction of the time scale parameter the per capita growth rate becomes much higher even at low density. Thus the species can recover itself from the endemic level, and extinction is prevented. In this paper, we have incorporated the weak Allee effect in prey's growth in order to capture the true essence of the slow-fast cycle. 

The main objective of this paper is to provide a detailed slow-fast analysis of the temporal model based on the various mathematical approach discussed above, and numerically investigating the corresponding spatially extended slow-fast model. In prey-predator models, the oscillatory dynamics of the system arises from the Hopf bifurcation but in the slow-fast setting other than Hopf bifurcating limit cycle, the system exhibit various other interesting periodic solution namely canard and relaxation oscillation. Here we are interested to explore these solutions analytically and numerically with the help of sophisticated slow-fast techniques as discussed above. 

Over the last few decades, significant work has been done to study the mechanism of spatial dispersal of species, with the help of reaction-diffusion systems. In this regard, the study of invasion of the exotic species emerged to be of particular interest for many theoretical or field ecologists. Biological invasion is a complex phenomenon that starts with a local introduction of exotic species and once it gets established in a particular region, they start spreading and occupying new areas \cite{Lewis16}. The study of rate and pattern of spread is of primary importance as they have a huge environmental impact and also can be used as a control measure for other species. The invasion of exotic species takes place via propagation of continuous wave fronts, as well as via irregular movement of separate population patches \cite{Lewis00}. In \cite{Morozov06,Petrovskii02a}, the authors have shown that patchy invasion is possible in deterministic models as a result of the Allee effect. To the best of our knowledge, there exist hardly any work in literature which such complete analysis of the slow-fast prey-predator model as well as on the effect of explicit time-scale parameters on pattern formation. Here, first we perform exhaustive numerical simulations to examine the pattern spread and patchy invasion of the species. And then, we examine how the invasion of the species are affected by the time-scale parameter. 

This paper is divided into two parts, in the first part we will provide mathematical analysis of the temporal slow-fast model and the second part is supported by exhaustive numerical simulations to investigate the effect of varying time-scale in biological invasion. In section 2, we introduce the non-dimensionalised temporal model and standard stability analysis is performed. Then in section 3 we introduced the slow-fast system. In section 4 we discussed GSPT and blow-up technique for a detailed mathematical analysis of slow-fast systems. The existence and uniqueness of the relaxation oscillation is studied here followed by the phenomenon of canard explosion. In section 5, we consider the corresponding slow-fast spatio-temporal model to examine how the spread of invasive species is affected by time-scale parameters. Finally, we draw the conclusion of our work in section 6.

\section{Temporal Model and its linear stability analysis}\label{Section:2}
We consider the classical Rosenzweig-MacArthur prey-predator model with the multiplicative weak Allee effect in prey growth \cite{Courchamp08,Murray89,Sen11}. Let $u$ and $v$ be the prey and its specialist predator densities, respectively, at time $t$. In appropriately chosen dimensionless variables and parameter (see \cite{Morozov06} for details), the model is given by the following equations:
\begin{subequations} \label{eq:temp_weak}
	\begin{eqnarray}
	\dfrac{du}{dt} &=& f(u,v):=\gamma u(1-u)(u+\beta)-v\Big(\frac{u}{1+\alpha u}\Big),\\
	\dfrac{dv}{dt} &=& g(u,v):=v\Big(\frac{u}{1+\alpha u}-\delta \Big).
	\end{eqnarray}
\end{subequations}
Here and below, the sign ``:='' means ``is defined''. 
We focus on the case where the growth rate of the prey population is damped by the weak Allee effect, so that $0<\beta<1$. For $\beta<0$, the Allee effect becomes strong (in this case, the prey population has another [usntable] equilibrium at  $u=\beta$); for $\beta\ge 1$, the Allee effect is absent \cite{Lewis93}. The per capita growth rate $f(u,v)/u$ is increasing for $0<u<\dfrac{1-\beta}{2}$ and decreasing for $\dfrac{1-\beta}{2}<u<1$. 
The predator is a specialist predator as they do not have any alternative food source to survive apart from $u$. The prey-dependent functional response is taken to be Holling type II \cite{Holling65}. The system contains four positive dimensionless parameters where $\beta$ quantifies the weak Allee parameter, $\gamma$ is the coefficient
proportional to the maximum per capita growth rate, called characteristic growth rate \cite{Jankovic14}. The parameter $\alpha$ characterizes the inverse saturation level of the functional response and $\delta$ is the natural mortality rate of the predator. Throughout this paper we will consider $\delta$ as the bifurcation parameter to determine the stability conditions of the coexisting steady-state for the model \ref{eq:temp_weak}.

\noindent Depending on the species traits, the prey population often grows much faster than its predator; one well known example is given by hare and lynx where hares reproduce much faster than lynx \cite{Stenseth97}. This motivated researchers to introduce a small time-scale parameter $\varepsilon,\ 0<\varepsilon<1$ in the basic model (\ref{eq:temp_weak}). The parameter $\varepsilon$ is interpreted as the ratio between the linear death rate of the predator and the linear growth rate of the prey \cite{Hek10,Rinaldi92}. And the assumption $\varepsilon < 1$ implies that one generation of predator can encounter several generations of prey \cite{Holling65,Kuehn15}. Therefore considering the difference in the time scale, the slow-fast version of the dimensionless model (\ref{eq:temp_weak}) can be written as
\begin{subequations} \label{eq:temp_weak_fast}
	\begin{eqnarray}
	\dfrac{du}{dt} &=& f(u,v)=\gamma u(1-u)(u+\beta)-\frac{uv}{1+\alpha u},\\
	\dfrac{dv}{dt} &=&\varepsilon g(u,v)=\varepsilon v \Big(\frac{u}{1+\alpha u}-\delta \Big),
	\end{eqnarray}
\end{subequations}
with initial conditions $u(0)\ge 0,\ v(0)\ge 0$ . Since the prey population grows faster compared to the predator, $u$ and $v$ are referred to as fast and slow variables, respectively and time $t$ is called fast time. The equilibrium points for the system  are independent of $\varepsilon$, thus system (\ref{eq:temp_weak}) and (\ref{eq:temp_weak_fast}) has same equilibrium points. The extinction equilibrium point and prey only equilibrium point of system (\ref{eq:temp_weak}) (as well as for (\ref{eq:temp_weak_fast}) are given by $E_0=(0,0)$ and $E_1=(1,0)$ respectively. The interior equilibrium point $E_*(u_*,v_*) $ of the system is the point where the non-trivial prey nullcline intersect with non-trivial predator nullcline in the interior of the positive quadrant, and we have,
$$u_* = \dfrac{\delta}{1-\alpha \delta}, \ \ v_* = \gamma (1-u_*)(u_*+\beta)(1+\alpha u_*).$$
$E_*$ is feasible if the parametric restriction $\delta (\alpha + 1)<1$ holds. With the help of linear stability analysis, we find $E_0$ is always a saddle point. $E_1$ is stable for $\delta > \dfrac{1}{1+\alpha}$ and saddle point for $\delta < \dfrac{1}{1+\alpha}$. $E_*$ bifurcates from predator free equilibrium point $E_1$ through transcritical bifurcation at $\delta = \delta_T\equiv \dfrac{1}{1+\alpha}.$\\ Now evaluating the Jacobian matrix  for the system (\ref{eq:temp_weak_fast}) at the interior equilibrium point $E_*(u_*,v_*)$ we have 
\begin{equation*}\label{eq:Jacobian matrix}
J_*=
\begin{pmatrix}
\gamma(u_*(2-3u-2\beta)+\beta)-\dfrac{v_*}{(1+u_*\alpha)^2}&-\dfrac{u_*}{1+u_*\alpha}\\\dfrac{\varepsilon v_*}{(1+u_*\alpha)^2}&\varepsilon\Big(\dfrac{u_*}{1+u_*\alpha}-\delta\Big)
\end{pmatrix}.
\end{equation*}
From the feasibility condition of $E_*$ we always have Det$(J_*)>0$. The interior equilibrium point is stable if Tr$(J_*)<0$, and it loses its stability via super-critical Hopf bifurcation when Tr$(J_*)=0$ and is unstable for Tr$(J_*)>0$.
The Hopf threshold $\delta=\delta_H$ can be obtained by solving Tr$(J_*)=0$ which on simplification gives
\begin{equation}\label{eq:Hopf_threshold}
\delta_H = \dfrac{1+\alpha^2\beta-\sqrt{1+\alpha+\alpha^2-\alpha \beta +\alpha^2\beta+\alpha^2 \beta^2}}{\alpha(-1-\alpha+\alpha \beta +\alpha^2\beta)}.
\end{equation}
Transversality condition for Hopf bifurcation is satisfied at $\delta=\delta_H.$
The coexistence steady state $E_*(u_*,v_*)$ is stable for $\delta>\delta_H$ and it destabilizes for $\delta<\delta_H$, surrounded by a stable limit cycle. The bifurcation diagrams of the system (\ref{eq:temp_weak}) with $\delta$ as bifurcation parameter and for two different values of $\beta$ are plotted in Fig.~\ref{fig:bifurcation_eps_1}. 
It is readily seen that, in case $\beta<1/\alpha$, with an increase in the Allee threshold $\beta$ (hence making the Allee effect weaker) the Hopf bifurcation point shifts to the left. Correspondingly, the steady species coexistence occurs for a broader parameter range of $\delta$ (see Fig.~\ref{fig:bifurcation_eps_1}). Furthermore, with the increase of $\beta$ the size of the limit cycle is reduced. Increase in the strength of the weak Allee effect not only enhance the stable coexistence rather reduces the amplitude of stable oscillatory coexistence.
\begin{figure}[!ht]
	\centering
	\mbox{\subfigure[$\beta=0.2$]{\includegraphics[width=0.45\textwidth]{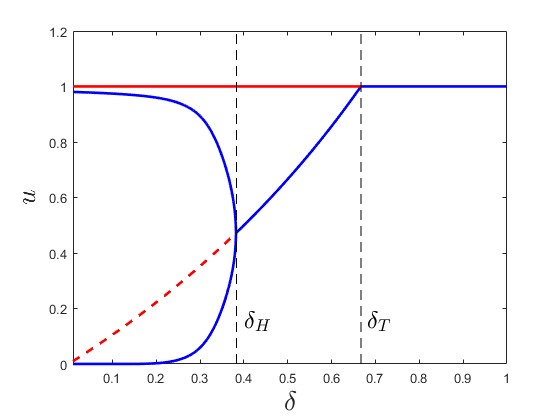}}
		\subfigure[$\beta=0.8$]{\includegraphics[width=0.45\textwidth]{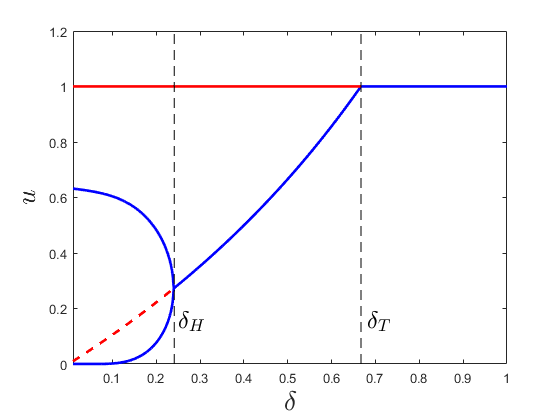}}
	}
	\caption{The bifurcation diagram of system (\ref{eq:temp_weak}) using $\delta$ as the bifurcation parameter shown for two different values of $\beta$ and other parameters as \ $\alpha=0.5,\ \gamma=3.$ Here the blue solid curve shows either the stable steady state (for $\delta>\delta_H$) or the size of the stable limit cycle (for $\delta<\delta_H$), the solid red line shows the semitrivial `prey-only' state in the range where it is a saddle (hence unstable), and the dashed red line shows the unstable coexistence steady state.}\label{fig:bifurcation_eps_1}
\end{figure}
\noindent Interestingly, the linear stability results remains unaltered in the presence of slow-fast time scale as the analytical conditions are independent of $\varepsilon$. The linear stability analysis fails to capture the complete dynamics of the slow-fast system (\ref{eq:temp_weak_fast}) for $0<\varepsilon \ll 1$. The system (\ref{eq:temp_weak_fast}) exhibit catastrophic transition which cannot be captured by standard stability analysis, rather the model may sometimes overestimate ecological resilience \cite{Siteur16}. Therefore, to study the complete dynamics of the system we take help of geometric singular perturbation theory and blow-up technique which will be discussed in next sections.

\section{Slow-fast system}

In this section, we shall describe the dynamics of the slow-fast system (\ref{eq:temp_weak_fast}) when $0<\varepsilon\ll1$. To understand the dynamics of the system (\ref{eq:temp_weak_fast}) for sufficiently small $\varepsilon\ (>0)$ we need to consider the behaviour of two subsystems corresponding to (\ref{eq:temp_weak_fast}), which can be obtained for $\varepsilon=0$. The system in its singular limit, $\varepsilon=0$ is obtained as follows
\begin{subequations} \label{eq:layer-system}
	\begin{eqnarray}
	\dfrac{du}{dt} &=& f(u,v)= \gamma u(1-u)(u+\beta)-\frac{uv}{1+\alpha u},\\
	\dfrac{dv}{dt} &=&0.
	\end{eqnarray}
\end{subequations}
The above system is known as fast subsystem or layer system corresponding to the slow-fast system (\ref{eq:temp_weak_fast}). The fast flow consists with constant predator density determined by the initial condition $v(0)=c$, and by integrating the differential equation 
\begin{equation} \label{eq:fast-flow}
\begin{aligned}
\dfrac{du}{dt} = \gamma u(1-u)(u+\beta)-\frac{uc}{1+\alpha u},
\end{aligned}
\end{equation}
with initial condition $u(0)>0$. The direction of the fast flow depends on the choice of initial conditions $u(0)$, $v(0)$ and other parameter values. Green horizontal lines are solution trajectories with appropriate direction as shown in Fig.~\ref{fig:dynamics_slow_fast}a. 
\noindent Now writing system (\ref{eq:temp_weak_fast}) in terms of the slow time $\tau:=\varepsilon t$, we get the equivalent system in terms of slow time derivatives,
\begin{subequations} \label{eq:temp_weak_slow}
	\begin{eqnarray}
	\varepsilon \dfrac{du}{d\tau} &=& f(u,v) = \gamma u(1-u)(u+\beta)-\frac{uv}{1+\alpha u},\\
	\dfrac{dv}{d\tau} &=& g(u,v) =v\Big(\frac{u}{1+\alpha u}-\delta \Big).
	\end{eqnarray}
\end{subequations}
Substituting $\varepsilon = 0$ in the above system to find the following differential algebraic equation (DAE),
\begin{subequations} \label{eq:DAE}
	\begin{eqnarray}
	0&=& f(u,v) = \gamma u(1-u)(u+\beta)- \frac{uv}{1+\alpha u},\\
	\dfrac{dv}{d\tau} &=& g(u,v) = v\Big(\frac{u}{1+\alpha u}-\delta \Big),
	\end{eqnarray}
\end{subequations}
which is known as the slow subsystem corresponding to the slow-fast system (\ref{eq:temp_weak_slow}). The solution of the above system is constrained to the set $\{(u,v) \in \mathbb{R}^2_+:f(u,v)=0\}$ and is known as critical manifold $C_0$. This set has one-one correspondence with the set of equilibrium of the system (\ref{eq:fast-flow}). This critical manifold consists of two of different manifolds
\begin{equation*}
\begin{aligned}
C_0^0 &=&  &\{(u,v)\in \mathbb{R}^2_+ : u=0, v\ge0\},\\
C_0^1 &=& &\Big\{(u,v) \in \mathbb{R}^2_+ :v =q(u):= \gamma (1-u)(u+\beta)(1+\alpha u),\ 0<u<1,\ v > 0\Big\},
\end{aligned}
\end{equation*}
such that $C_0 = C^0_0 \cup C^1_0$ where $C_0^0$ is the positive $v$-axis and $C_0^1$ is a portion of a parabola as shown in Fig.~\ref{fig:dynamics_slow_fast}a, marked with black colour. The slow flow on the critical manifold is given by
\begin{equation} \label{eq:slow-flow}
\begin{aligned}
\dfrac{du}{d\tau} &=& \dfrac{g(u,q(u))}{\dot{q}(u)},
\end{aligned}
\end{equation}
where $`.`$ refers to the differentiation w.r.t $u$.
\noindent The solution of the system (\ref{eq:temp_weak_fast}) for sufficiently small $\varepsilon >0$ cannot be approximated from its limiting solution at $\varepsilon=0$. Therefore, $\varepsilon=0$ is the singular limit of the system (\ref{eq:temp_weak_fast}). The solution of the full system is obtained by combining the solution of the system in its singular limits. And depending on the region in the phase space we use either of the subsystems.

\noindent For $\alpha=0.5$, $\beta=0.22$, $\gamma=3$, and $\delta=0.3$ the coexistence steady-state is unstable for $0<\varepsilon\leq1$ and is surrounded by a stable limit cycle. Interestingly the size and shape of stable limit cycle change with the variation of $\varepsilon$ which is shown in Fig. \ref{fig:dynamics_slow_fast}(b). The size and shape of closed curve attractor (blue), obtained for $\varepsilon=0.001$ is quite different from stable limit cycle (magenta) which is obtained for $\varepsilon=1$. This shape does not change if we further decrease $\varepsilon$, keeping other parameters fixed. This observation is based upon the numerical simulation and we need detailed analysis to understand the possible shape of the trajectories in singular limit $\epsilon\rightarrow0$. For $\varepsilon=0.001$, the closed curve attractor (blue) consists of two horizontal segments on which flow is fast and one curvilinear and vertical segment where the flow is slow. They are obtained by concatenating the solution of the layer and the reduced subsystem respectively. The two horizontal segments of the attractor (blue) are the
perturbed trajectories corresponding to the layer system. This signifies the fast growth or decay of the prey species while predator density remains unaltered. The vertical portion close to $v$-axis and curvilinear part are close to the critical manifolds $C^0_0$ and $C^1_0$.  Change in shape and size of the attractor does not solely depend upon the magnitude of $\varepsilon$ rather determined by the magnitude of the parameters involved with the reaction kinetics and time scale parameter. 

\noindent Now we fix $\varepsilon=0.01$ and other parameters as mentioned above, except $\delta$. Small variation in $\delta$ just below the  $\delta_H$ results in rapid change in size and shape of the periodic attractor (see Fig.~\ref{fig:dynamics_slow_fast}(c)). A small limit cycle (cyan) appears for $\delta=0.3762$ known as canard cycle without head. This cycle encounter a change in curvature when $\delta=0.376165$ and the resulting cycle is known as canard cycle with head (blue). Further decreasing $\delta$ to $0.36$ the system settles down to a closed cycle known as relaxation oscillation. Further decrease in $\delta$ does not alter the size and shape of the closed attractor and the trajectories converge to the stable relaxation oscillation cycle even for $\varepsilon$ sufficiently small. In the next section, we will derive the analytical conditions for the existence of canard cycle and relaxation oscillation. The analytical results will help us to identify the domains in the prarametric plane where we can find these different types of closed curve attractors.
\begin{figure}[!ht]
	\centering
	\mbox{\subfigure[]{\includegraphics[width=0.45\textwidth]{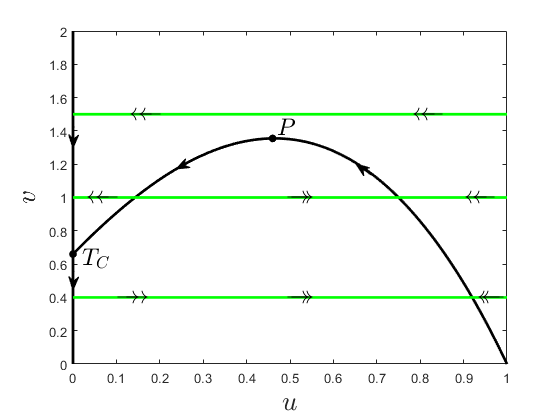}}
		\subfigure[]{\includegraphics[width=0.45\textwidth]{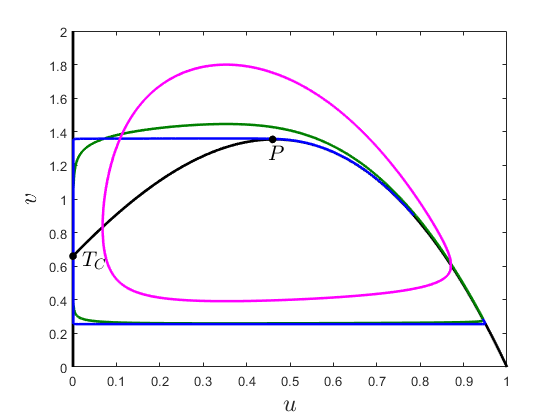}}
	}
	\mbox{\subfigure[]{\includegraphics[width=0.45\textwidth]{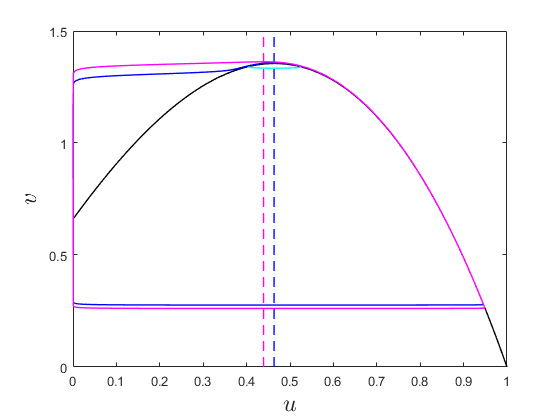}}}
	\caption{(a) Dynamics of the slow-fast system (\ref{eq:temp_weak_fast}) where single arrow represnt slow flow and double arrow represent fast flow, (b) limit cycles for different values of time-scale parameter  $\varepsilon=1$ (magenta), $\varepsilon=0.1$ (green), $\varepsilon=0.001$ (blue), (c) canard cycles for different values of the bifurcation parameter $\delta$ for $\varepsilon=0.01$, canard cycle without head for $\delta=0.3762$ (cyan), canard cycle with head for $\delta=0.376165$ (blue), relaxation oscillation for $\delta=0.36$ (magenta) and other parameters are mentioned in the text.  }
	\label{fig:dynamics_slow_fast}
\end{figure}

\section{Analysis of slow-fast system}

The critical manifold $C_0^1$ can be divided into two parts, one part consists of the attractors of the fast sub-system and another part is repelling in nature. The attracting and repelling part of the manifold is separated by a non-degenerate fold point $P$. The fold point $P(u_f,v_f)$ is characterized by the following conditions \cite{Krupa01A}.	
$$ \frac{\partial{f}}{\partial u}(u_f,v_f)=0,\ \frac{\partial f}{\partial v}(u_f,v_f)\ne 0,\ \frac{\partial^2f}{\partial u^2}(u_f,v_f)\ne 0,\ \text{and}\ g(u_f,v_f)\ne 0.$$ 
The components of the fold point are given by $$u_f = \dfrac{(\alpha-\alpha \beta -1)+(1+\alpha+\alpha^2-\alpha \beta+\alpha^2\beta+\alpha^2 \beta^2)^{\frac{1}{2}}}{3\alpha}, $$ $$ v_f = \gamma (1-u_f)(u_f+\beta)(1+\alpha u_f), $$
which is the maximum point on the critical manifold. The fold point divides the critical manifold into normally hyperbolic attracting ($C^{1,a}_0$) and repelling ($C^{1,r}_0$) submanifolds given by
\begin{equation*}
\begin{aligned}
C_0^{1,a} &=\Big\{(u,v) \in \mathbb{R}^2_+ :v =q(u), u_f<u\le1\Big\} \\
C_0^{1,r} &=\Big\{(u,v) \in \mathbb{R}^2_+ :v =q(u), 0\le u<u_f\Big\}.
\end{aligned}  
\end{equation*}
The point of intersection of $C^1_0$ with the vertical $v$-axis is $T_C(0,\beta \gamma)$, which is the transcritical bifurcation point for the fast subsystem. It follows from Fenichel's theorem \cite{Fenichel79,Kuehn15}, there exist locally invariant slow sub-manifolds $C^1_{\varepsilon}$ and $C_{\varepsilon}^0$ which are diffeomorphic to the respective critical manifolds $C^1_0$ and $C^0_0$, except at the non-hyperbolic points $P$ and $T$. $C^1_0$ can be written explicitly as $v=q(u)$, we assume the invariant manifold $C_{\varepsilon}^1$ can be written as a perturbation of $v=q(u)$ as follows, with $\varepsilon$ as perturbation parameter,
\begin{equation} \label{eq:invariant-manifold}
C_{\varepsilon}^1=\Big\{(u,v) \in \mathbb{R}^2_+ :v =q(u,\varepsilon),\ 0< u<1,\ v >0\Big\},
\end{equation}
where
$q(u,\varepsilon) = q_0(u) + \varepsilon q_1(u) + \varepsilon^2 q_2(u)+\cdots$, and
$C^0_{\varepsilon}=\{(u,v)\in \mathbb{R}^2_+ : u=0, v\ge0\}$.
Using the invariance condition and the asymptotic expansion of $q(u,\varepsilon)$, we can find the perturbed invariant manifold approximated up to the desired order. The approximation of $q(u,\varepsilon)$ up to second order is provided in Appendix A with explicit expressions for $q_0$,  $q_1$, $q_2$. The approximations of invariant manifold for different values of $\varepsilon$ are shown in Fig.~\ref{fig:perturbed_manifolds}. This approximation has two non-removable discontinuities in the vicinity of the non-hyperbolic points $P$ and $T_C$.

\noindent The critical manifold $C^1_0$ is normally hyperbolic except at the points $P (u_f,v_f)$ and $T_C(0,\beta \gamma)$ and so is $C^1_\varepsilon$. Thus any trajectory starting near the attracting (repelling) submanifold $C^{1,a}_0$ $(C^{1,r}_0)$ cannot cross the fold point $P$ (transcritical point $T_C$). We can see from Fig.~\ref{fig:dynamics_slow_fast} that for sufficiently small values of $\varepsilon$ the trajectories pass enough close to the attracting manifold $C^{1,a}_0$ and cross the point $P$. Fenichel's theory is not adequate to determine the analytical expression for perturbed sub-manifolds close to $C_0^{1}$ and is continuous in the vicinity of the  non-hyperbolic points.
\begin{figure}[!ht]
	\begin{center}
		\mbox{
			\includegraphics[width=0.45\textwidth]{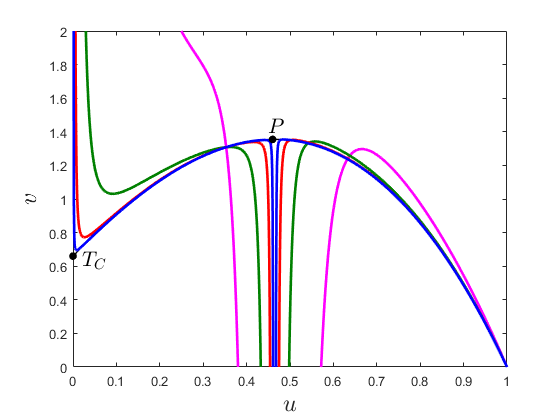}}
		\caption{ Second order approximation of perturbed manifold from GSPT with $\varepsilon=1$ (magenta), $\varepsilon=0.1$(green),\ $\varepsilon=0.01$(red),\ $\varepsilon=0.001$(blue); for the parameter values $\alpha=0.5,\ \beta=0.22,\ \gamma=3,\ \delta=0.3$ .}\label{fig:perturbed_manifolds}
	\end{center}
\end{figure}

\noindent Therefore, to construct a trajectory passing through the vicinity of the point $P$ we must remove the singularity at this point. Depending on parameter $\delta,$ the predator nullcline intersects either $C^{1,a}_0$ or $C_0^{1,r}$ or passes through the point $P$. Thus the coexistence equilibrium point $E_*$ lies either on $C^{1,a}_0$ or $C_0^{1,r}$ or coincides with $P$.  When $E_*$ lie on $C_0^{1,a}$ it is globally asymptotically stable and every trajectory converges to $E_*$. When $E_*$ coincides with the fold point $P$ then  $ f_u(E_*)=0$, $f_v(E_*)\ne 0$, $f_{uu}(E_*)\ne 0$, and  $g(E_*)= 0$, this point is called the canard point. For the model under consideration (\ref{eq:temp_weak_fast}), the Hopf point coincides with the canard point. The solution passing through this point is known as the canard solution. For $\delta<\delta_H$, the coexistence equilibrium point $E_*$ lies on the repelling sub-manifold $C_0^{1,r}$, it is unstable and we obtain a special kind of periodic solution consisting of two fast flow (almost horizontal) and two slow flow (passing close to $C^{1,a}_0$ and $C^0_0$) called relaxation oscillation. We will discuss the existence of such solutions in consequent subsections.

\noindent To remove the singularity at the fold point, we use blow-up transformation at the non-hyperbolic fold point which will extend the system over a 3-sphere in $\mathbb{R}^4$, denoted by $S^3 = \{x\in \mathbb{R}^4: ||x||=1\}$. Using the blow-up technique we remove the singularity from the system and determine the canard solution passing through this point. 

\noindent To apply the blow-up technique, first, we transform the slow-fast system (\ref{eq:temp_weak_fast}) into its desired slow-fast normal form.

\subsection{Slow-fast normal form}

Here we consider a topologically equivalent form of the system (\ref{eq:temp_weak_fast}) by re-scaling the time with the help of a transformation $t \rightarrow (1+\alpha u)t$,  where $(1+\alpha u) >0$ \cite{Kuznetsov04}. The transformed system is
\begin{equation}\label{eq:topo_eq}
\begin{aligned}
\dfrac{du}{dt} &= \gamma u(1-u)(u+\beta)(1+\alpha u)-uv \equiv F(u,v,\delta),\\
\dfrac{dv}{dt} &= \varepsilon\left(uv-\delta v(1+\alpha u)\right) \equiv \varepsilon G(u,v,\delta).
\end{aligned}
\end{equation}
The fold point $P$ coincides with the coexistence equilibrium point at $\delta=\delta_*$. As a consequence, the following conditions hold
\begin{equation}\label{eq:folded_singularity}
\begin{aligned}
&F(u_*,v_*,\delta_*)=0,\ F_u(u_*,v_*,\delta_*)= 0,\ F_v(u_*,v_*,\delta_*)\ne 0,\ F_{uu}(u_*,v_*,\delta_*)\ne 0,\\
&\ G_u(u_*,v_*,\delta_*)\ne 0,\ G_{\delta}(u_*,v_*,\delta_*)\ne 0 \ \text{and} \ G(u_*,v_*,\delta_*)=0.
\end{aligned}
\end{equation} 
Using the transformation $U=u-u_*,\ V=v-v_*,\ \lambda=\delta-\delta_*$, we translate the fold point to the origin, and together with the conditions (\ref{eq:folded_singularity}) the system reduces to the slow-fast normal form near $(0,0)$ as follows
\begin{equation}\label{eq:slow-fast_normal_form}
\begin{aligned}
\dfrac{dU}{dt} &=-Vh_1(U,V)+U^2h_2(U,V)+\varepsilon h_3(U,V),\\
\dfrac{dV}{dt}  &=\varepsilon\left(Uh_4(U,V)-\lambda h_5(U,V)+Vh_6(U,V)\right),
\end{aligned}
\end{equation}
where $h_i's,\ i=1,2,3\cdots 6$ are given in Appendix B. 

Here $\lambda$ measures the perturbation of $\delta$ from $\delta_*$ and is considered as bifurcation parameter for the system (\ref{eq:slow-fast_normal_form}). The bifurcation parameter $\lambda$ and time-scale parameter $\varepsilon$ are assumed to be independent of time. We now extend the above system to $\mathbb{R}^4$ by augmenting the equations $\dfrac{d\lambda}{dt}=0$ and $\dfrac{d\varepsilon}{dt} =0$ to system (\ref{eq:slow-fast_normal_form}) and study the dynamics of the system in the vicinity of $(0,0,0,0)$.

\subsection{Blow-up transformation}
The fold point $P$ of the system (\ref{eq:temp_weak_fast}) and the equilibrium point $E_*$ coincides at the Hopf bifurcation threshold. Hence $P$ is now a Canard point. Let us consider the blow-up space $S^3 = \{(\bar{U},\bar{V},\bar{\lambda},\bar{\varepsilon})\in \mathbb{R}^4: \bar{U}^2+\bar{V}^2+\bar{\lambda}^2+\bar{\varepsilon}^2=1\}$ and an interval $\mathcal{I}:=[0,\rho]$ where $\rho>0$ is a small constant. We define a manifold $\mathcal{M}:=S^3\times\mathcal{I}$ and the blow-up map $\Phi$, $\Phi: \mathcal{M} \rightarrow \mathbb{R}^4$ where 
\begin{equation}\label{eq:blow-up-map}
\Phi(\bar{U},\bar{V},\bar{\lambda},\bar{\varepsilon},\bar{r}) = (\bar{r}\bar{U},\bar{r}^2\bar{V},\bar{r}\bar{\lambda},\bar{r}^2\bar{\varepsilon}):=(U,V,\lambda,\varepsilon).
\end{equation}
Using the above map we can write the transformed system as follows 
\begin{equation}\label{eq:blow_up_system}
\begin{aligned}
\dfrac{d\bar{U}}{dt} &= \dfrac{1}{\bar{r}}\left( \dfrac{dU}{dt} - \bar{U}\dfrac{d\bar{r}}{dt}\right),\
\dfrac{d\bar{V}}{dt} = \dfrac{1}{\bar{r}^2}\left(\dfrac{dV}{dt} - 2\bar{r} \bar{V}\dfrac{d\bar{r}}{dt}\right),\\
\dfrac{d\bar{\lambda}}{dt} &= \dfrac{1}{\bar{r}}\left( \dfrac{d\lambda}{dt} - \bar{\lambda}\dfrac{d\bar{r}}{dt}\right),\
\dfrac{d\bar{\varepsilon}}{dt} = \dfrac{1}{\bar{r}^2}\left(\dfrac{d\varepsilon}{dt} - 2\bar{r} \bar{\varepsilon}\dfrac{d\bar{r}}{dt}\right),
\end{aligned}
\end{equation}
where $\dfrac{d\bar{U}}{dt},\dfrac{d\bar{V}}{dt}, \dfrac{d\bar{\lambda}}{dt}, \dfrac{d\bar{\varepsilon}}{dt}$ are given in (\ref{eq:slow-fast_normal_form}). To study the dynamics of the transformed system on and around the  hemisphere $S^3_{\varepsilon\ge 0}$ we will introduce the charts with direction blow-up maps \cite{Kuehn15,Tu}. Along each direction of the coordinate axis we define the charts $K_1$, $K_2$, $K_3$ and $K_4$ by setting $\bar{V}=1$, $\bar{\varepsilon}=1$, $\bar{U}=1$ and $\bar{\lambda}=1$ respectively, in (\ref{eq:blow_up_system}). The charts $K_1$ and $K_3$ describe the dynamics in the neighborhood of the equator of $S^3$ and $K_2$ describes the dynamics in a neighborhood of the positive hemisphere. Here, we mainly focus on chart $K_2$ to prove the existence of a periodic solution for $0<\bar{\varepsilon}\ll1$. Re-scaling the time with the transformation $\bar{t}:=\bar{r}t$, we desingularize the system (\ref{eq:blow_up_system}) so that the multiplicative factor $\bar{r}$ disappears. The transformed version of the system (\ref{eq:blow_up_system}) can be written in chart $K_2$ as follows
\begin{equation}\label{eq:desingularized_system_K2}
\begin{aligned}
\dfrac{d\bar{U}}{dt} &=-\bar{V}b_1+\bar{U}^2b_2+\bar{r}\left(a_1\bar{U}-a_2\bar{U}\bar{V}+a_3\bar{U}^3\right)+O(\bar{r}(\bar{\lambda}+\bar{r})),\\
\dfrac{d\bar{V}}{dt} &= \bar{U}b_3-\bar{\lambda}b_4 + \bar{r}\left(a_4\bar{U}^2+a_5\bar{V}\right)+O(\bar{r}(\bar{\lambda}+\bar{r}),\\
\dfrac{d\bar{\lambda}}{dt} &= 0,\\
\dfrac{d\bar{\varepsilon}}{dt} &=0,
\end{aligned}
\end{equation}
where $b_j$'s are given in Appendix B,  
\begin{equation}\label{eq:a_i}
\ a_1 = a_5 = 0,\ \
a_2 = 1,\ \ 
a_3 = -\left(\gamma+\alpha \gamma(4u_*+\beta-1)\right),\ \
a_5 = u_*-(1+u_*\alpha)\delta_*.
\end{equation}

\noindent The condition for the destabilization of the coexistence equilibrium point through singular Hopf-bifurcation is summarized in the following theorem. The Hopf bifurcation of the system (\ref{eq:slow-fast_normal_form}) occurs at $\lambda=0$. At Hopf bifurcation, the purely imaginary eigenvalues of the corresponding Jacobian matrix are a function of $\varepsilon$, which tends to zero as $\varepsilon \rightarrow 0$. Also, in slow-time derivative, the eigenvalues are a function of $1/\varepsilon. $ Thus on both the time scales the Hopf bifurcation is singular as $\varepsilon \rightarrow 0.$ 

\begin{theorem}
	Let $(U,V)=(0,0)$ is the canard point of the transformed system (\ref{eq:slow-fast_normal_form}) at $\lambda=0$ such that $(0,0)$ is a folded singularity and $G(0,0,0)=0$. Then for sufficiently small $\varepsilon$ there exist a singular Hopf bifurcation curve $\lambda=\mathcal{\lambda_H}(\sqrt{\varepsilon})$ such that the equilibrium point $p$ of the system (\ref{eq:slow-fast_normal_form}) is stable for $\lambda>\mathcal{\lambda_H}(\sqrt{\varepsilon})$ and 
	\begin{equation}
	\mathcal{\lambda_H}(\sqrt{\varepsilon})=-\dfrac{b_3(a_1+a_5)}{2b_2b_4}\varepsilon+O(\varepsilon^{3/2}).
	\end{equation}
\end{theorem}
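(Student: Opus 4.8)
The plan is to read the statement as an ordinary planar Hopf bifurcation for the $\varepsilon$-dependent equilibrium $p$ of the normal form (\ref{eq:slow-fast_normal_form}), and to extract the curve $\lambda_H(\sqrt{\varepsilon})$ from the trace condition on the Jacobian. First I would locate $p=(U_*(\lambda,\varepsilon),V_*(\lambda,\varepsilon))$ by solving $-Vh_1+U^2h_2+\varepsilon h_3=0$ together with $Uh_4-\lambda h_5+Vh_6=0$ as an asymptotic series in the small quantities $\lambda$ and $\varepsilon$. Expanding the $h_i$ about the origin and matching coefficients against (\ref{eq:desingularized_system_K2})--(\ref{eq:a_i}), the second relation yields $U_*=(b_4/b_3)\lambda+(\text{higher order})$ and the first gives $V_*=(U_*^2 b_2+\varepsilon h_3(0,0))/b_1+\cdots$; both are small, and on the Hopf curve (where $\lambda$ turns out to be $O(\varepsilon)$) one has $U_*,V_*=O(\varepsilon)$.

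With $p$ in hand I would compute the Jacobian $J(p)$ of (\ref{eq:slow-fast_normal_form}). Its entries are, to the order needed, $J_{11}=2b_2U_*+\varepsilon a_1+O(\varepsilon^{3/2})$, $J_{12}=-b_1+O(U_*,\varepsilon)$, $J_{21}=\varepsilon b_3+O(\varepsilon^{3/2})$ and $J_{22}=\varepsilon a_5+O(\varepsilon^{3/2})$, where $\varepsilon a_1$ collects the residual $O(\varepsilon)$ part of $J_{11}$ (from $\varepsilon\,\partial_U h_3$ and the $V_*$-shift) and $a_5$ is the origin value of $h_6$. The Hopf locus is defined by $\mathrm{Tr}\,J(p)=0$ and $\mathrm{Det}\,J(p)>0$. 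Substituting $U_*=(b_4/b_3)\lambda$ into $\mathrm{Tr}\,J(p)=2b_2U_*+\varepsilon(a_1+a_5)+O(\varepsilon^{3/2})$ and solving for $\lambda$ gives exactly $\lambda_H(\sqrt{\varepsilon})=-\tfrac{b_3(a_1+a_5)}{2b_2b_4}\varepsilon+O(\varepsilon^{3/2})$. Note that the $O(\sqrt{\varepsilon})$ term is absent because at $\lambda=0$ the trace is already $O(\varepsilon)$, so the leading nonzero contribution appears at order $\varepsilon$, consistent with the half-integer expansion variable $\sqrt{\varepsilon}$.

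Next I would verify that this crossing is a genuine, correctly oriented Hopf bifurcation. At $\lambda_H$ one computes $\mathrm{Det}\,J(p)=b_1b_3\,\varepsilon+O(\varepsilon^{3/2})$, which is positive once the signs of $b_1,b_3$ are read from Appendix~B; the eigenvalues are therefore $\pm i\sqrt{b_1b_3\,\varepsilon}+o(\sqrt{\varepsilon})$, purely imaginary on the curve and collapsing to $0$ as $\varepsilon\to0$, which is precisely the sense in which the Hopf bifurcation is singular. Transversality follows from $\partial_\lambda\mathrm{Tr}\,J(p)\big|_{\lambda_H}=2b_2(b_4/b_3)+O(\sqrt{\varepsilon})\neq0$ (nonzero by nondegeneracy of the fold, $b_2,b_4\neq0$), and the sign of $2b_2b_4/b_3$ fixes the orientation; matching with the earlier linear analysis, where $E_*$ is stable for $\delta>\delta_H$ and $\lambda=\delta-\delta_*$ with $\delta_*=\delta_H$, forces $\mathrm{Tr}\,J(p)<0$ and hence stability of $p$ for $\lambda>\lambda_H(\sqrt{\varepsilon})$. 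Equivalently, the same equilibrium and its stability can be obtained directly from the desingularized chart-$K_2$ system (\ref{eq:desingularized_system_K2}) for $\bar r>0$ and then blown down, which is the route most consistent with the machinery already set up.

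The step I expect to be the main obstacle is the asymptotic bookkeeping. The generic scale near the fold is $\sqrt{\varepsilon}$, yet the Hopf curve lives at the finer scale $\lambda=O(\varepsilon)$, so obtaining the leading coefficient correctly requires the trace to full $O(\varepsilon)$ accuracy; at that order the quadratic term $U_*^2h_2$, the $\varepsilon h_3$-shift of the equilibrium, and the $Vh_6$ contribution all feed in simultaneously. The delicate part is showing that their net $O(\varepsilon)$ effect is exactly the combination $a_1+a_5$ while every remaining contribution is genuinely pushed to $O(\varepsilon^{3/2})$, which demands expanding $h_1,\dots,h_6$ one order beyond the naive leading term. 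Confirming $\mathrm{Det}\,J(p)>0$ uniformly in the relevant parameter window, so that the crossing is through the imaginary axis rather than a saddle transition, is the secondary point requiring care, but it reduces to pinning down the signs of $b_1$ and $b_3$ from Appendix~B.
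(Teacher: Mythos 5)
Your proposal is correct, and at its core it performs exactly the computation the paper does: locate the $\varepsilon$-dependent equilibrium perturbatively (with $U_*=(b_4/b_3)\lambda$ to leading order), impose $\mathrm{Tr}\,J=0$, and solve for $\lambda$ to obtain $\lambda_H(\sqrt{\varepsilon})=-\tfrac{b_3(a_1+a_5)}{2b_2b_4}\varepsilon+O(\varepsilon^{3/2})$. The one genuine difference is where the computation is carried out. The paper's Appendix B works in the blown-up chart-$K_2$ coordinates $U=\sqrt{\varepsilon}\bar U$, $V=\varepsilon\bar V$, $\lambda=\sqrt{\varepsilon}\bar\lambda$ with rescaled time, where the system becomes a regular perturbation in $\sqrt{\varepsilon}$ (all terms $O(1)$); it finds the equilibrium and trace-zero condition there, then applies the blow-down map $\lambda_H=\bar\lambda\sqrt{\varepsilon}$. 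You instead work directly in the unscaled normal form (\ref{eq:slow-fast_normal_form}), where the second row of the Jacobian carries the factor $\varepsilon$. These are equivalent: the blow-up is a diagonal rescaling plus a time rescaling, under which the trace-zero locus of the equilibrium is preserved (the blown-up trace is $\varepsilon^{-1/2}$ times yours), so both routes yield the same curve. What the paper's route buys is exactly the bookkeeping you flag as the main obstacle — in chart $K_2$ the error terms $O(\sqrt{\varepsilon}(\lambda+\sqrt{\varepsilon}))$ are already organized by (\ref{eq:desingularized_system_K2}), so no delicate $O(\varepsilon)$ versus $O(\varepsilon^{3/2})$ tracking is needed by hand. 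What your route buys is more than the paper delivers: your verification that $\mathrm{Det}\,J(p)=b_1b_3\varepsilon+O(\varepsilon^{3/2})>0$ (so the eigenvalues are $\pm i\sqrt{b_1b_3\varepsilon}$, collapsing to zero and exhibiting the singular nature of the bifurcation), together with the transversality and orientation check, actually addresses the stability clause ``$p$ is stable for $\lambda>\lambda_H(\sqrt{\varepsilon})$'' in the theorem statement, which the paper's proof asserts but never verifies — it stops at solving the trace equation.
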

\begin{proof} 
	The proof of the theorem is given in Appendix B.
\end{proof}	
\noindent The singular Hopf bifurcation curve for the system (\ref{eq:topo_eq}) is thus given by 
\begin{equation} \label{eq:Singular_Hopf_curve}
\begin{aligned}
\mathcal{\delta_H}(\sqrt{\varepsilon}) = & \dfrac{1+\alpha^2\beta-\sqrt{1+\alpha+\alpha^2-\alpha\beta+\alpha^2\beta+\alpha^2\beta^2}}{\alpha(-1-\alpha+\alpha\beta+\alpha^2\beta)}- \\ & \quad \dfrac{b_3(a_1+a_5)}{2b_2b_4}\varepsilon+O(\varepsilon^{3/2}).
\end{aligned}
\end{equation}
In Fig.~\ref{fig:Singular_Hopf_Maximal_Canard} the singular Hopf bifurcation curve (red) is plotted in $\delta-\varepsilon$ parametric plane, it clearly explains how the singular Hopf-bifurcation threshold changes with the variation in $\varepsilon$. 
Once the coexistence equilibrium loses stability through Hopf bifurcation, at the Canard point, we find a closed orbit as attractor surrounding the unstable equilibrium point. From this point small amplitude stable canard cycle originates enclosing the point $P$ and then forms canard cycle with head depending on the parameter values. The following theorem  provides an anlaytical expression of the maximal canard curve in $(\lambda-\varepsilon)$ plane.

\begin{theorem}
	Let $(U,V)=(0,0)$ is the canard point of the slow-fast normal form (\ref{eq:slow-fast_normal_form}) at $\lambda=0$ such that $(0,0)$ is a folded singularity and $G(0,0,0)=0$. Then for $\varepsilon>0$ sufficiently small there exists maximal canard curve $\lambda=\lambda_c(\sqrt{\varepsilon})$ such that the slow flow on the normally hyperbolic invariant submanifolds $\mathcal{M}_{\varepsilon}^{1,a}$ connects with  $\mathcal{M}_{\varepsilon}^{1,r}$ in the blow-up space. And $\lambda_c(\sqrt{\varepsilon})$ is given by
	\begin{equation}
	\begin{aligned}
	\lambda_c(\sqrt{\varepsilon}) &= -\dfrac{1}{A_5}\Big(\dfrac{3A_1}{4A_4^2}+\dfrac{A_2}{2A_4}+A_3\Big)\varepsilon + O(\varepsilon^{3/2})
	\end{aligned}
	\end{equation}
\end{theorem}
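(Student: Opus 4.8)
The plan is to follow the blow-up analysis of Krupa and Szmolyan \cite{Krupa01A,Krupa01B} and to locate the maximal canard as a transverse connection of the continued slow manifolds inside the rescaling chart $K_2$. The entire argument is carried out in the desingularized system (\ref{eq:desingularized_system_K2}), which, after the blow-up has removed the singularity of (\ref{eq:slow-fast_normal_form}) at the fold, is a \emph{regular} perturbation problem in the blow-up radius $\bar r$ (recall that in $K_2$ one has $\bar\varepsilon=1$, $\varepsilon=\bar r^2$ and $\lambda=\bar r\bar\lambda$).

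First I would analyse the leading-order flow obtained by setting $\bar r=0$ in (\ref{eq:desingularized_system_K2}). After rescaling $\bar U,\bar V,\bar\lambda$ and time so that the coefficients $b_1,b_2,b_3,b_4$ are normalised to $1$, this reduces to the classical planar canard normal form $\dot{\bar U}=\bar U^2-\bar V$, $\dot{\bar V}=\bar U-\bar\lambda$. At $\bar\lambda=0$ this system possesses the explicit invariant parabola along which the flow is uniform, namely
\[ \Gamma_0:\quad \bar V=\bar U^2-\tfrac12,\qquad \bar U(t)=\tfrac{t}{2},\qquad \bar V(t)=\tfrac{t^2}{4}-\tfrac12, \]
on which $\dot{\bar U}\equiv\tfrac12$ and $\bar U$ increases monotonically from $-\infty$ to $+\infty$, connecting the attracting branch to the repelling branch. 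This identifies the singular ($\bar r=0$) maximal canard and shows that to leading order it occurs precisely at $\bar\lambda=0$.

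Next I would track the two slow manifolds under the $\bar r$-perturbation. By Fenichel's theorem \cite{Fenichel79,Kuehn15} the attracting and repelling manifolds $\mathcal{M}_{\varepsilon}^{1,a}$ and $\mathcal{M}_{\varepsilon}^{1,r}$ extend from the equatorial charts $K_1$ and $K_3$ into $K_2$ as smooth graphs that are $O(\bar r)$-perturbations of $\Gamma_0$; the chart-matching estimates guaranteeing entry with the correct asymptotics are standard and can be imported from \cite{Krupa01A}. I would then introduce a splitting (distance) function $D(\bar\lambda,\bar r)$ measuring the separation of $\mathcal{M}_{\varepsilon}^{1,a}$ and $\mathcal{M}_{\varepsilon}^{1,r}$ along a section transverse to $\Gamma_0$, with $D(0,0)=0$. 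Its first-order expansion is a Melnikov-type integral of the $O(\bar r)$ terms in (\ref{eq:desingularized_system_K2}) — carrying the coefficients $a_1,\dots,a_5$, equivalently the rescaled constants $A_1,\dots,A_5$ — integrated against the bounded solution of the adjoint variational equation along $\Gamma_0$. Since $\Gamma_0$ is explicit and the perturbation is polynomial in $(\bar U,\bar V)$, this integral is elementary; it yields $\partial_{\bar\lambda}D(0,0)\ne0$ (the transversality, which is where the condition $b_4\ne0$, i.e. $G_\delta(u_*,v_*,\delta_*)\ne0$, is used) together with the value of $\partial_{\bar r}D$. Solving $D=0$ via the implicit function theorem then produces a unique smooth curve $\bar\lambda_c(\bar r)=\bar\lambda_1\,\bar r+O(\bar r^2)$, with $\bar\lambda_1$ the computed ratio of integrals.

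Finally I would undo the blow-up: since $\lambda=\bar r\bar\lambda$ and $\bar r=\sqrt\varepsilon$ in $K_2$, the curve $\bar\lambda_c(\bar r)$ translates into $\lambda_c(\sqrt\varepsilon)=\bar\lambda_1\,\varepsilon+O(\varepsilon^{3/2})$, and propagating the normalisation constants $b_1,\dots,b_4$ back through the rescaling converts $\bar\lambda_1$ into the stated coefficient $-\tfrac{1}{A_5}\left(\tfrac{3A_1}{4A_4^2}+\tfrac{A_2}{2A_4}+A_3\right)$. The main obstacle I expect is twofold: the Melnikov/solvability computation itself (and the attendant bookkeeping of how $b_1,\dots,b_4$ and the rescaling feed into the $A_i$, which is routine but highly error-prone), and — conceptually more delicate — the rigorous justification that the Fenichel manifolds continued through $K_1$ and $K_3$ genuinely match the $\Gamma_0$-based objects in $K_2$ with exponentially small error, so that the $K_2$ distance function really governs the global connection. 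The genuine content of the theorem, however, is entirely the transversal crossing of $\mathcal{M}_{\varepsilon}^{1,a}$ and $\mathcal{M}_{\varepsilon}^{1,r}$ at $\bar\lambda=0$ to leading order.
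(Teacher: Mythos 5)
Your proposal follows essentially the same route as the paper's Appendix C: work in the rescaling chart $K_2$, identify the singular maximal canard at $\bar r=0,\ \bar\lambda=0$ as the explicit invariant parabola of the integrable limiting system, measure the splitting of the continued attracting and repelling manifolds by a Melnikov-type distance function linear in $(\bar r,\bar\lambda)$, verify the transversality $\partial_{\bar\lambda}D\neq 0$ (coming from $b_4\neq 0$), solve $D=0$ by the implicit function theorem, and blow down via $\lambda=\bar r\bar\lambda$, $\bar r=\sqrt{\varepsilon}$ to get $\lambda_c(\sqrt{\varepsilon})$ at order $\varepsilon$. The only cosmetic difference is that you normalise $b_1,\dots,b_4$ to one and describe the Melnikov integrand through the bounded solution of the adjoint variational equation, whereas the paper keeps the coefficients and integrates against $\nabla H$ for the explicit first integral $H(U,V)=e^{-2b_2V/b_3}\bigl(\tfrac{b_3}{2}U^2-\tfrac{b_1b_3^2}{4b_2^2}-\tfrac{b_1b_3}{2b_2}V\bigr)$, whose gradient along the parabola is precisely that adjoint solution.
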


\begin{proof} The proof of this theorem is given in Appendix C.
\end{proof}

\noindent The maximal canard curve, along which the canard cycle with head appears for the system (\ref{eq:topo_eq}) is given by 
\begin{equation}\label{eq:Maximal_canard_curve}
\begin{aligned}
\delta_c(\sqrt{\varepsilon})  = & \dfrac{1+\alpha^2\beta-\sqrt{1+\alpha+\alpha^2-\alpha\beta+\alpha^2\beta+\alpha^2\beta^2}}{\alpha(-1-\alpha+\alpha\beta+\alpha^2\beta)} - \\ & \quad \dfrac{1}{A_5}\Big(\dfrac{3A_1}{4A_4^2}+\dfrac{A_2}{2A_4}+A_3\Big)\varepsilon + O(\varepsilon^{3/2}).
\end{aligned}
\end{equation}
Keeping $\alpha$, $\beta$ and $\varepsilon\ (>0)$ fixed, $\delta_c(\sqrt{\varepsilon})$ gives the threshold for the existence of canard cycle with head. A schematic diagram of the threshold curves in $\delta-\varepsilon$-plane is illustrated in Fig.~\ref{fig:Singular_Hopf_Maximal_Canard} and it divides the $\delta-\varepsilon$ parametric plane into four domains.
\begin{figure}[!ht]
	\begin{center}
		\centering
		\mbox{{\includegraphics[width=0.5\textwidth]{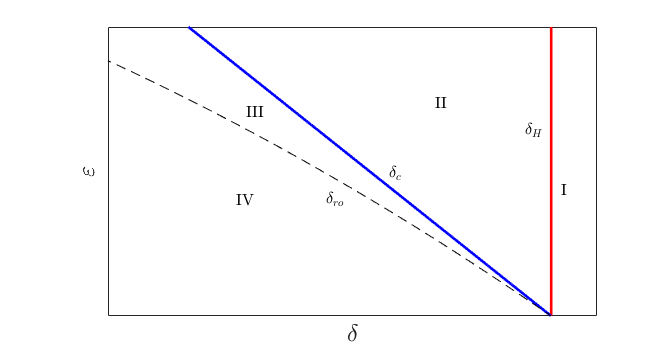}}
		}\caption{Schematic diagram showing singular Hopf bifurcation curve $\mathcal{\delta_H}$ (red), maximal canard curve $\delta_c$ (blue), and  $\delta_{ro}$ (dashed black) relaxation oscillation cycle. }\label{fig:Singular_Hopf_Maximal_Canard}
	\end{center}
\end{figure}
In domain I, when $\delta > \delta_\mathcal{H}$, the coexistence equilibrium point is stable.  For a fixed $\varepsilon>0$, as we decrease $\delta$ from domain I to domain II small amplitude canard cycles appear after crossing the Hopf bifurcation threshold $\delta = \delta_\mathcal{H}.$ In domain II, that is when $\delta_c <\delta < \delta_\mathcal{H},$ the system experiences a transition from canard cycle with head to canard cycle without head. The size of the canard cycle increases on decreasing $\delta$ and the shape of the cycle changes to canard with a head at $\delta=\delta_c.$ The canard cycle whith head persists in a narrow domain III, where $\delta_{ro}<\delta<\delta_c$. On further decreasing $\delta,$ that is, when $\delta\le\delta_{ro}$ the unstable equilibrium point is surrounded by a stable periodic attractor called relaxation oscillation. This periodic attractor consists of two concatenated slow (close to the critical manifold) and fast (almost horizontal and away from the critical manifold) flow.
We can see that for sufficiently small $\varepsilon$, this transition, from small canard cycle to relaxation oscillation through canard cycle with head, takes place within a narrow interval of the parameter $\delta$ and the phenomenon is known as canard explosion. This mechanism is further illustrated with the help of numerical example in the sub-section \ref{sec:Canard_explosion}.

\subsection{Entry-Exit Function} \label{subsec:entry_exit} 
The canard cycle and relaxation oscillation pass through the fold point $P$ when $\varepsilon=0$. These closed attractors pass through the vicinity of $P$ for $\varepsilon\ll1$. We now prove the existence of a trajectory that jumps from the fold point to the other attracting slow manifold $C^0_0$ through fast horizontal flow and continuing there for a constant time the trajectory leaves $C^0_0$ at a certain point. This is determined by the entry-exit function and we can find the coordinates of the exit point from the slow manifold $C^0_0$. To do this, first, rewrite the system (\ref{eq:temp_weak_fast}) as a Kolmogorov system \cite{Freedman80} as follows 
\begin{equation}\label{eq:entry_exit}
\begin{aligned}
\dfrac{du}{dt} &=& uf_1(u,v) &= u\Big(\gamma (1-u)(u+\beta)-\frac{v}{1+\alpha u}\Big),\\
\dfrac{dv}{dt} &=& \varepsilon vg_1(u,v) &= \varepsilon v \Big(\frac{u}{1+\alpha u}-\delta \Big).
\end{aligned}
\end{equation}
We can verify $f_1(0,v)=\gamma \beta-v$, $g_1(0,v)=-\delta<0$, which implies that $f_1(0,v)<0$ if $v> \gamma \beta$, $f_1(0,v)>0$ if  $v< \gamma \beta$. 
$T\ (0,\beta \gamma)$ on the vertical axis is the transcritical bifurcation point and we can divide the slow manifold $C^0_0$ into two parts $V^+:=\{(u,v):u=0, v>\gamma \beta\}$ and $V^-:=\{(u,v):u=0,\ v<\gamma \beta\}$. Clearly $V^+$ is attracting and $V^-$ is repelling.

\noindent Let us fix $\varepsilon>0$ and let $u_{max}$ be the point of maximum of the critical manifold $C^1_0$ obtained from the extremum condition $\dot{q_0}(u)=0,$  where $q_0(u)$ is given in Appendix A. 
Solving for $u_{max}$ we find  
\begin{equation} \label{eq:u_max}
u_{max} = \dfrac{(\alpha-\alpha \beta -1)+\sqrt{1+\alpha+\alpha^2-\alpha \beta+\alpha^2\beta+\alpha^2 \beta^2}}{3\alpha}
\end{equation}
and from the expression of $C_0^1$ we have
\begin{equation}\label{eq:v_max}
\begin{aligned}
v_{max} = q(u_{max})\ = \ \gamma (1-u_{max})(u_{max}+\beta)(1+\alpha u_{max}).
\end{aligned}
\end{equation}
Now we consider a trajectory starting from a point, say $(u_{1},v_{1})$, where $u_1 < u_{max}$ and $v_1 = v_{max}$. The trajectory gets attracted toward the attracting manifold $V_+$ and starts moving downward maintaining proximity to $V^+.$ It was expected that the trajectory would leave the vertical axis at the bifurcation point $T$ where it loses its stability \cite{Muratori89}. The trajectory crosses the point $T$ and continues to move vertically downward remaining close to the repelling part $V^-$, for a certain time, until a minimum predator population $p(v_1)$ is attained s.t. $0<p(v_{1})<\gamma \beta$. After leaving the slow manifold near the point $p(v_1)$, the trajectory starts moving along a fast horizontal segment and gets attracted to attracting slow manifold $C_0^{1,a}$. This point of exit is determined by an implicit function $p(v_1)$, called entry-exit function, which is defined implicitly as
\begin{equation*}
\int_{p(v_{1})}^{v_{1}}\dfrac{f_1(0,v)}{vg_1(0,v)}dv=0.
\end{equation*}
For simplicity we define $v_{0}:=p(v_{1})$, then we have 
\begin{equation}\label{eq:en_ex_integral}
\begin{aligned}
\int_{v_{0}}^{v_{1}}\dfrac{v-\gamma\beta}{v\delta}dv = 0\,\,
\implies(v_{1}-v_{0})-\gamma\beta\ln\Big(\frac{v_{1}}{v_{0}}\Big) =0
\end{aligned}
\end{equation}
Substituting (\ref{eq:v_max}) into equation (\ref{eq:en_ex_integral}) we obtain a transcendental equation in $v_0$ which we solve numerically to obtain the exit point.\\ For the parameter values $\alpha=0.5$, $\beta=0.2$, $\gamma=3$, $\delta=0.3$ we obtain $u_{max}=0.472$,\ $v_{1} = 1.316$, and solving the transcendental equation (\ref{eq:en_ex_integral}) we get $p(v_{1}) = 0.207509$, which is the exit point from the manifold $C_0^0$. 
\begin{theorem}
	Let $P$ be the fold point on the critical manifold $C^1_0$ where the slow flow on the attracting manifold $C_{0}^{1,a}$ is given by (\ref{eq:slow-flow}). Also assume that the coexistence equilibrium point lies on the normally hyperbolic repelling critical submanifold under the parametric restriction,
	$$\dfrac{\delta}{1-\alpha \delta}< \dfrac{(\alpha-\alpha \beta -1)+\sqrt{1+\alpha+\alpha^2-\alpha \beta+\alpha^2\beta+\alpha^2 \beta^2}}{3\alpha}$$
	and let $U$ denotes a small neighborhood of a singular trajectory $\gamma_0$ consisting of alternate slow and fast trajectories. Then for sufficiently small $\varepsilon$ there exist a unique attracting limit cycle $\gamma_\varepsilon \subset U$ such that $\gamma_\varepsilon \rightarrow \gamma_0$ as $\varepsilon \rightarrow 0$.
\end{theorem}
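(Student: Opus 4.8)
The plan is to construct the candidate singular cycle $\gamma_0$ explicitly, establish perturbed invariant manifolds away from the two non-hyperbolic points via Fenichel's theorem, control the passage through the fold $P$ and through the transcritical point $T$ using the blow-up analysis and the entry-exit function respectively, and finally encode the whole return trip in a Poincar\'e map whose contractivity delivers both existence and uniqueness.

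First I would assemble $\gamma_0$ from four concatenated arcs: a slow descent along the attracting branch $C_0^{1,a}$ terminating at the fold $P(u_f,v_f)$; a fast horizontal jump at level $v=v_f$ from $P$ to the attracting portion $V^+$ of the vertical manifold $C^0_0$; a slow descent along $C^0_0$ that crosses the transcritical point $T(0,\beta\gamma)$ and continues down the repelling portion $V^-$ to the exit level $v_0=p(v_f)$ fixed by the entry-exit relation (\ref{eq:en_ex_integral}); and a fast horizontal jump at level $v=v_0$ that reattaches to $C_0^{1,a}$. The parametric restriction $u_*<u_{max}$ guarantees that the coexistence equilibrium lies on the repelling branch $C_0^{1,r}$, so no slow segment can terminate at $E_*$ and the concatenation genuinely closes into a singular periodic orbit contained in $U$.

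Next, on every compact piece of $C_0^{1,a}$, $C_0^{1,r}$ and $C^0_0$ bounded away from $P$ and $T$, Fenichel's theorem supplies locally invariant slow manifolds that are $O(\varepsilon)$-close to the critical manifolds, on which the reduced flow (\ref{eq:slow-flow}) is perturbed only by $O(\varepsilon)$, while trajectories are exponentially attracted to the attracting sheets along the fast fibres. The two singular points require separate treatment. At the fold $P$ I would invoke the blow-up desingularization already carried out in (\ref{eq:blow_up_system})--(\ref{eq:desingularized_system_K2}): in chart $K_2$ the passage map is well defined for $0<\varepsilon\ll1$, incoming trajectories are funnelled through a shrinking neighborhood of $P$ and ejected along the fast horizontal fibre, with a uniformly contracting transition map. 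At the transcritical point the entry-exit function of Subsection \ref{subsec:entry_exit} does the analogous job: a trajectory entering along $V^+$ at level $v_1$ stays exponentially close to the vertical axis, crosses $T$, and leaves near $V^-$ at the level $v_0=p(v_1)$ solving (\ref{eq:en_ex_integral}), so that $v_1\mapsto p(v_1)$ is a smooth, strongly contracting correspondence.

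Finally I would fix a transversal section $\Sigma$ (for instance the horizontal segment $\{v=v_f,\ u<u_f\}$ just below the fold level) and compose the four transition maps above into a Poincar\'e return map $\Pi_\varepsilon:\Sigma\to\Sigma$. Each slow segment contributes an exponentially strong contraction transverse to the manifold, while the fold and entry-exit passages contribute only bounded factors; hence $\Pi_\varepsilon$ maps $\Sigma$ into itself as a contraction for $\varepsilon$ small, and the contraction mapping principle yields a unique fixed point, i.e. a unique attracting limit cycle $\gamma_\varepsilon\subset U$. Convergence $\gamma_\varepsilon\to\gamma_0$ as $\varepsilon\to0$ then follows from the $O(\varepsilon)$-closeness of the Fenichel manifolds to the critical manifolds together with continuity of the blow-up and entry-exit transition maps in $\varepsilon$. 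The main obstacle will be the contraction estimate for $\Pi_\varepsilon$ across the transcritical passage: one must differentiate the implicitly defined entry-exit function $p(v_1)$ and verify that the factor $p'(v_1)$, combined with the weak expansion accrued along the repelling segment $V^-$, is dominated by the exponential contraction gained along $V^+$ and $C_0^{1,a}$, so that the derivative of the return map stays below one uniformly in $\varepsilon$.
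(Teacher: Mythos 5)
Your proposal follows essentially the same route as the paper's Appendix D: both construct the singular cycle $\gamma_0$ from alternating slow and fast segments, use Fenichel theory away from the degenerate points, handle the passage along the vertical axis with the entry-exit function, handle the fold via the blow-up/contraction results of Krupa--Szmolyan, and then obtain existence, uniqueness, and attractivity from a contracting Poincar\'e return map built by composing these transition maps. The only differences are cosmetic (the paper places its sections $\Delta^{in},\Delta^{out}$ near the vertical axis rather than below the fold, and composes two maps instead of four), so your argument is a sound reconstruction of the paper's proof.
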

\begin{proof}
	The proof is given in Appendix D.
\end{proof} 
\noindent These cycles are shown with the help of a numerical example in Appendix D. 
When $\varepsilon \rightarrow 0$, all the trajectories asymptotically converge to this stable limit cycle consisting of alternate slow and fast transitions of prey and predator densities. This cycle can be interpreted as: when the predator population reaches some high density there is a rapid decline in the prey population due to excessive consumption by the specialist predator and the prey reaches a considerably low level. As a consequence, the predator population declines slowly until it reaches a low threshold density at which the prey population again starts growing. Consequently, the prey regenerates within a very short time while predator density remains more or less fixed. Finally, the predator population starts growing slowly due to the abundance of resources. Finally, when the predator density reaches its maximum level, the slow-fast cycle completes and this dynamics continues with time. 

\subsection{Canard Explosion} \label{sec:Canard_explosion}

In the previous sub-sections, we have observed the periodic dynamics of the slow-fast system near the canard point, where the predator nullcline intersects the non-trivial prey nullcline at the fold point. This occurs at a certain threshold of the parameter $\delta$. At this point, the coexistence equilibrium point loses stability through singular Hopf bifurcation and a small amplitude stable limit cycle is observed. Due to the decrease of the parameter $\delta$, the Hopf bifurcating stable cycle grows in size and settles down to relaxation oscillation. The fast transition in the size of the limit cycle from small canard cycles to relaxation oscillation occurs in an exponentially small range of the parameter $\delta$. This phenomenon is known as the canard explosion.

\noindent The family of canard cycles are already shown in Fig.~\ref{fig:dynamics_slow_fast}(c) for fixed $\varepsilon$ and three values of $\delta$ close to the singular Hopf bifurcation threshold $\delta_H$. The coexistence equilibrium is stable for $\delta>\delta_H$ and the trajectory converges to the stable steady state for any initial condition as it is the global attractor. We can see that for $\delta$ just below $\delta_H$, a stable limit cycle grows in size and a new periodic solution emerges known as the canard cycle without a head  Fig.~\ref{fig:dynamics_slow_fast}(c) (cyan color). This marks the onset of the canard explosion. Further decreasing $\delta$ slightly we obtain another canard cycle known as canard with head Fig.~\ref{fig:dynamics_slow_fast}(c) (blue color). This cycle is special in the sense that from the vicinity of the fold point it follows the repelling slow manifold $C^{1,r}_0$ for $O(1)$ time, before jumping to another attracting manifold. A maximal canard is obtained at $\delta=\delta_c$. After crossing the maximal canard threshold, the system settles down to a large stable periodic solution called relaxation oscillation, which marks the end of canard explosion. This orbit is characterized by the fact that the slow flow on reaching the vicinity of the fold point directly jumps to another attracting slow manifold, as studied in the previous section. The strength of the Allee effect has a significant influence on the amplitude of stable oscillatory coexistence of both the species. The change in the amplitude of the limit cycle corresponding to canard explosion is shown in Fig.~\ref{fig:canard_cycles}.
\begin{figure}[!ht]
	\centering
	\mbox{
		{\includegraphics[width=0.45\textwidth]{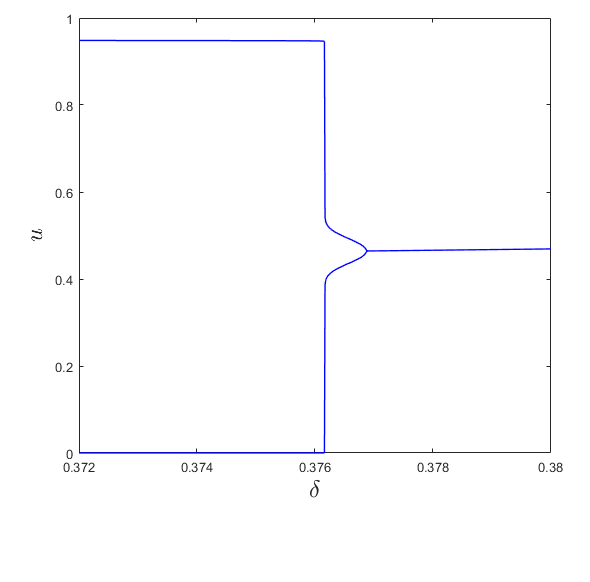}}
	}
	\caption{The bifurcation diagram showing the change in the amplitude of the canard cycles is plotted against $\delta$ for $\alpha=0.5,\ \gamma=3,\ \varepsilon=0.01$ $\beta=0.22$} \label{fig:canard_cycles}
\end{figure}

\noindent For smaller values of $\beta$, the size of the canard cycle is very large and the canard explosion occurs in an exponentially small interval. However, on increasing the value of $\beta$ the size of the limit cycle shrinks and instead of a sudden change in the size of the cycle, we observe a gradual increase in the amplitude of the periodic solution Fig.~\ref{fig:canard_different_allee_strength}. Though the transition from canard cycle to relaxation oscillation takes place in a much wider parametric interval, in this case, it is difficult to distinctively identify the different periodic solutions. For smaller values of $\beta$, when the prey population is almost absent, the predator population also slowly declines to an almost endemic level. But on increasing the strength of the Allee effect the predator density never collapses but stays at the system at a much higher density. After which the system experience a sudden outbreak in the prey population within a very short interval of time. Again because of the abundance of resources, the predator population grows slowly and reaches the maximum capacity. Due to the exploitation of the resources, there is a fast decline in the prey density and this cycle continues.
\begin{figure}[!ht]
	\centering
	\mbox{\subfigure[]{\includegraphics[width=0.45\textwidth]{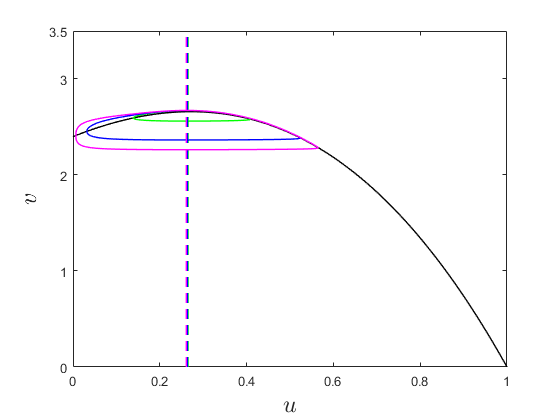}}}
	\mbox{\subfigure[]{\includegraphics[width=0.45\textwidth]{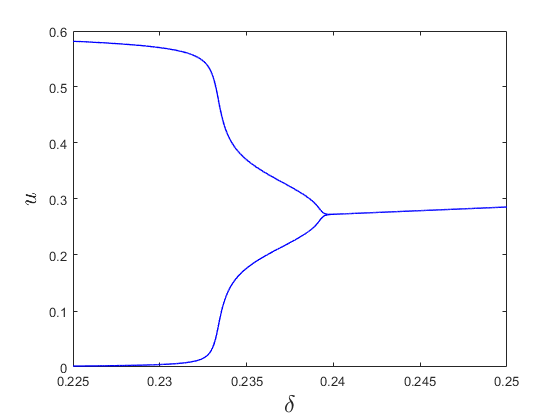}}}
	\caption{(a) The canard cycles for $\alpha=0.5,\ \beta=0.8,\ \gamma=3,\ \varepsilon=0.01$ and for different values of $\delta$ i.e $ \delta=0.234$ (green), $\delta=0.233$ (blue), $\delta=0.231$ (magenta), (b) the bifurcation diagram showing the change in the size of the cycles.} \label{fig:canard_different_allee_strength}
\end{figure}

\section{Spatio-temporal model}\label{sec:spatial}

We now consider the spatio-temporal model corresponding to the model (\ref{eq:temp_weak_fast}) with slow-fast time scale. Here, we consider that the prey and predator densities are functions of time and space, $u(t,{\bf x})$ and $v(t,{\bf x})$ denote prey and predator densities, respectively, at time $t$ and at spatial location ${\bf x}$. In case of one dimensional (1D) space ${\bf x}=x\in\mathbb{R}$ and for two dimensional (2D) space ${\bf x}=(x,y)\in\mathbb{R}^2$. For simplicity we assume that ${\bf x}$ belongs to a bounded domain $D\subset\mathbb{R}$ and $D\subset\mathbb{R}^2$ respectively. The spatio-temporal dynamics of the prey-predator interaction is described by the following reaction-diffusion equation
\begin{subequations} \label{eq:sptemp_weak}
	\begin{eqnarray}
	u_t &=& \gamma u(1-u)(u+\beta)-\frac{uv}{1+\alpha u}+\nabla^2u,\\
	v_t &=& \varepsilon \left(\frac{uv}{1+\alpha u}-\delta v\right)+d\nabla^2v,
	\end{eqnarray}
\end{subequations}
where $d$ is the ratio of diffusivity coefficients of predator to prey and $\nabla^2$ is the Laplacian operator. The above spatio-temporal model is subject to no-flux boundary condition and non-negative initial condition. The model (\ref{eq:sptemp_weak}) can not produce any stationary Turing pattern and it can be proved that the Turing instability condition is not satisfied. However, instability of the coexistence steady-state due to Hopf bifurcation combined with the diffusivity of two species leads to some dynamic pattern due to the formation of traveling wave, wave of invasion and spatio-temporal chaos. The mechanisms responsible for such kind of pattern formation are described in \cite{Lewis16}.

\noindent Analytical condition for the existence of traveling wave leads to successful invasion by specialist predator is derived in the next subsection. For simplicity of mathematical calculation, we restrict ourselves to one dimensional space to explain the existence of traveling wave. In case of two dimensional spatial domain, the analogous patterns are presented separately.

\subsection{Existence of Traveling wave}

To study the successful invasion by the predator we consider the system (\ref{eq:sptemp_weak}) in one dimensional space, we re-write the above system as
\begin{subequations} \label{eq:sptem_onedim}
	\begin{eqnarray}
	\frac{\partial u(t,x)}{\partial t} &=& f(u,v) +\frac{\partial^2u}{\partial x^2},\\
	\frac{\partial v(t,x)}{\partial t} &=& \varepsilon g(u,v) +d\frac{\partial^2v}{\partial x^2},
	\end{eqnarray}
\end{subequations}
where $f(u,v) = \gamma u(1-u)(u+\beta)-\frac{uv}{1+\alpha u},\ g(u,v)= \frac{uv}{1+\alpha u}-\delta v.$ The predator is introduced in a small domain where the prey density is at its carrying capacity. The successful invasion of the predator is characterized by the existence of a traveling wave joining the predator free steady-state with the coexistence steady-state. Depending upon the  stability of the coexistence steady-state, we can find monotone traveling wave, non-monotonic traveling wave, and periodic traveling wave as explained below with the help of numerical examples. 

\noindent We begin with deriving the minimum speed of the traveling wave which result in the successful invasion of the specialist predator into the space already inhabited by its prey. For this, we first consider a single-species model with the linear growth: 
$$\frac{\partial v(t,x)}{\partial t} = \alpha v+D\frac{\partial^2v}{\partial x^2},$$
where $\alpha$, $D\,>\,0$ are parameters with obvious meaning. Stricktly speaking, the above equation does not possess a traveling wave solution. However, for a compact initial condition, it is known that the tail of the profile propagates with the constant speed given by $c_{min}=2\sqrt{D\alpha}$, see \cite{Lewis16}, sometimes referred to as the Fisher spreading speed. 

\noindent For the invasion of predator into space inhabited by its prey, we consider the tail of the profile where $u\approx 1$ and $v\approx 0$ and linearize (\ref{eq:sptem_onedim}b) around $(1,0)$: 
\begin{equation} \label{eq:TW_linearized}
\frac{\partial v(t,x)}{\partial t} = \varepsilon \Big(\frac{1}{\alpha+1}-\delta \Big)v+d\frac{\partial^2v}{\partial x^2}. 
\end{equation}
Clearly, the speed of the traveling wave, at the onset of successful invasion, is given by
\begin{equation}
c_v = 2\Big(\varepsilon d \Big[\frac{1}{\alpha+1}-\delta\Big] \Big)^{1/2}.
\end{equation}
The feasibility condition is $\delta (\alpha + 1)<1$. The expression for $c_v$ indicates that the speed of traveling wave reduces in the order of $\sqrt{\varepsilon}$. We consider the existence of traveling wave starting from the predator free steady-state, which leads to the successful establishment of the predators, this requires the consideration of the system (\ref{eq:sptem_onedim}) with the following conditions
$$ u(t,x) = 1,\ \text{and \ } v(t,x) = 0,\ \text{as\ } x\rightarrow -\infty,\ \forall \,t, $$
$$ u(t,x) = u_*,\ \text{and \ } v(t,x) = v_*,\ \text{as\ } x\rightarrow \infty,\ \forall \,t .$$
We consider the traveling wave solution of the system (\ref{eq:sptem_onedim}) in the form $u(t,x)=\phi(\xi)$, $v(t,x)=\psi(\xi)$ where $\xi = x-ct$ and $c$ is the wave speed. The functions $\phi(\xi)$ and $\psi(\xi)$ thus satisfy the equations
\begin{equation} \label{eq:TW_2ODE}
\begin{aligned}
\dfrac{d^2\phi}{d\xi^2} + c\dfrac{d\phi}{d\xi} + f(\phi,\psi) &= 0,\\
d\dfrac{d^2\psi}{d\xi^2} + c\dfrac{d\psi}{d\xi} + \varepsilon g(\phi,\psi)  &=0.
\end{aligned} 
\end{equation}
Substituting $p(\xi)= -\dfrac{d\phi}{d\xi}$ and $q(\xi)= -\dfrac{d\psi}{d\xi}$, from (\ref{eq:TW_2ODE}) we can derive four coupled ordinary differential equations as follows
\begin{equation} \label{eq:TW_4ODE}
\begin{aligned}
\dfrac{d\phi}{d\xi} &= -p,\\
\dfrac{dp}{d\xi} & = -cp + f(\phi,\psi),\\
\dfrac{d\psi}{d\xi} &= -q,\\
\dfrac{dq}{d\xi} &= \dfrac{1}{d}(-cq+\varepsilon g(\phi,\psi)).
\end{aligned}
\end{equation}
Three homogeneous steady states ($E_0,\ E_1,\ E_*$) of the spatio-temporal model (\ref{eq:sptem_onedim})  corresponds to three steady states of system (\ref{eq:TW_4ODE}) are $Q_0(0,0,0,0)$, $Q_1(1,0,0,0)$ and $Q_*(u_*,0,v_*,0)$.
To ensure the successful invasion of the predator, we focus on the dynamics of the system (\ref{eq:TW_4ODE}) around $Q_1$ and $Q_*$. The Jacobian matrix of the system (\ref{eq:TW_4ODE}) evaluated at $Q_1$ is
\begin{equation}
J_{Q_1} = 
\begin{pmatrix}
0&-1&0&0\\ -\gamma(1+\beta)& -c& -\frac{1}{1+\alpha}&0\\0&0&0&1\\0&0& \frac{\varepsilon}{d}(\frac{1}{1+\alpha}-\delta)&-\frac{c}{d}
\end{pmatrix}  
\end{equation}
The eigenvalues of the matrix $J_{Q_1}$ are $\lambda _{1,2} = \frac{c}{2}\pm \frac{\sqrt{c^2+4g(1+\beta)}}{2}$ and 
$\lambda _{3,4} = -\frac{c(1+\alpha)}{2}\pm \frac{\sqrt{\Gamma}}{d(1+\alpha)}$ where $\Gamma=(1+\alpha)^2c^2-4\varepsilon d(1+\alpha)(1-\delta-\alpha\delta)$. First two eigenvalues are real, whereas $\lambda_{3,4}$ are real for $c^2\ge \frac{4\varepsilon d(1-\delta-\alpha\delta)}{1+\alpha}$. The traveling wave exist if all the eigenvalues are real, otherwise the trajectories will spiral around $Q_1$ and leads to negative population density. Hence, the minimum speed of traveling wave originating from predator free steady-state is 
\begin{equation}\label{eq:c_min}
c_{min} = \Big[\frac{4\varepsilon d(1-\delta-\alpha\delta)}{1+\alpha}\Big]^{1/2}.
\end{equation}
Note that $c_{min}$ depends on $\varepsilon$, thus for $\varepsilon<1$, the wave speed decreases. The minimum wave speed derived here is the same as it was derived earlier with the help of the linearized equation.
The expressions for the eigenvalues of the Jacobian matrix $J_{Q_*}$ are quite complicated and hence we avoid writing them here explicitly for the sake of brevity. For $c\ge c_{min}$, the eigenvalues of $J_{Q_*}$ can be real or complex depending on which we find monotone traveling wave and non-monotone as well as periodic traveling wave originating from the steady states $E_1$.

\begin{figure}[!ht]
	\begin{center}
		\mbox{\subfigure[$t=300$]{\includegraphics[width=0.45\textwidth]{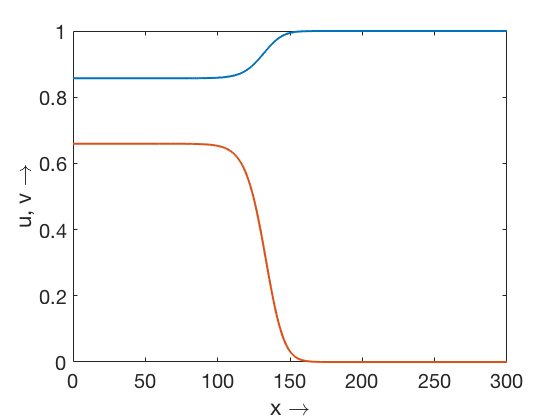}}
			\subfigure[$t=240$]{\includegraphics[width=0.45\textwidth]{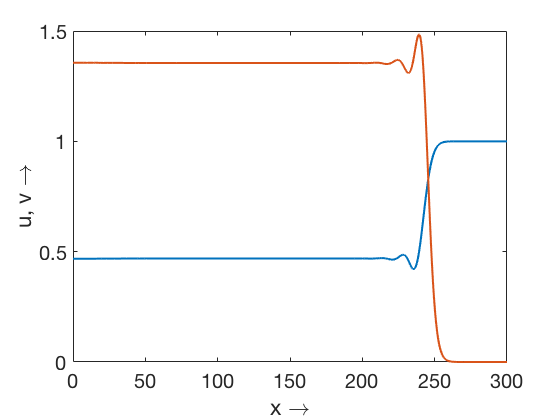}}}
		
		\mbox{\subfigure[$t=220$]{\includegraphics[width=0.45\textwidth]{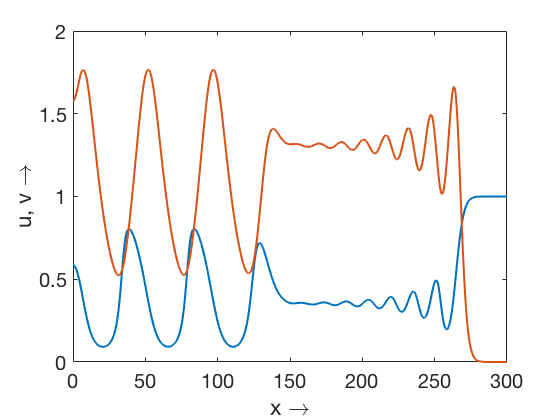}}
		}	
		\caption{Monotone traveling wave, non-monotone traveling wave and periodic traveling wave obtained for the model (\ref{eq:sptem_onedim}) with $\alpha = 0.5$, $\beta = 0.22$, $\gamma = 3$, $\varepsilon = 1$ and (a) $\delta=0.6$, (b) $\delta=0.38$ and (c) $\delta=0.3$ at different time as mentioned with the figures.}\label{fig:TW_1d_1}
	\end{center}
\end{figure}

To illustrate the existence of various type of traveling waves, we fix the parameter values $\alpha=0.5$, $\beta=0.22$,  $\gamma=3$, $\varepsilon=1$ and consider $\delta$ as variable parameter. From (\ref{eq:c_min}) we find that traveling wave exists for $\delta<2/3$. The eigenvalues of $J_{Q_*}$ are real for $0.52<\delta<0.667$ and the eigenvalues are complex conjugate for $\delta\le 0.52$. Complex conjugate eigenvalues with negative real parts correspond to non-monotone traveling wave and with positive real part correspond to periodic traveling wave. Three types of traveling waves are shown in Fig.~\ref{fig:TW_1d_1} for three different values of $\delta$. For $\delta\, =\,0.38$, the minimum wave speed $c_{min}\approx 1.07$, and the complex conjugate eigenvalues with negative real part are $-1.22\pm0.423i$, whereas for $\delta=0.3,\ c_{min}\approx 1.211$ and the complex eigenvalues with positive real part are $0.034\pm 0.405i$. The initial condition used in Fig.~\ref{fig:TW_1d_1} is given by
$$u(0,x)\,=\,\left\{\begin{array}{ll}
u_*, & 0\leq x\leq3\\
1, & 3< x\leq 300 \\
\end{array}\right.,\,\,\,
v(0,x)\,=\,\left\{\begin{array}{ll}
v_*, & 0\leq x\leq3\\
0, & 3< x\leq 300 \\
\end{array}\right..$$

\noindent The traveling wave emerging from the above initial conditions connects the prey-only steady state $E_1$ to the coexistence state $E_*$. Its shape depends on parameter values; for $\delta=0.6$, its profile is monotone (see Fig.~\ref{fig:TW_1d_1}a). A non-monotone traveling wave emerges for $\delta=0.38$ (see Fig.~\ref{fig:TW_1d_1}b). The range of spatio-temporal oscillation increases for values of $\delta$ close to the temporal Hopf bifurcation threshold. For $\delta=0.3$, we find periodic traveling wave, with a plateau behind the oscillatory front corresponding to steady state $E_*$ which is, for these parameter values, unstable: a phenomenon known as the dynamical stabilization \cite{Malchow02,Petrovskii00,Petrovskii01TPB,Sherratt98}.

\begin{figure}[ht!]
	\begin{center}
		\mbox{\subfigure[]{\includegraphics[width=0.45\textwidth]{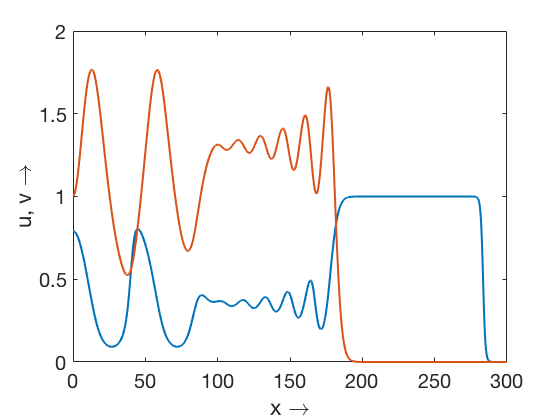}}
			\subfigure[]{\includegraphics[width=0.45\textwidth]{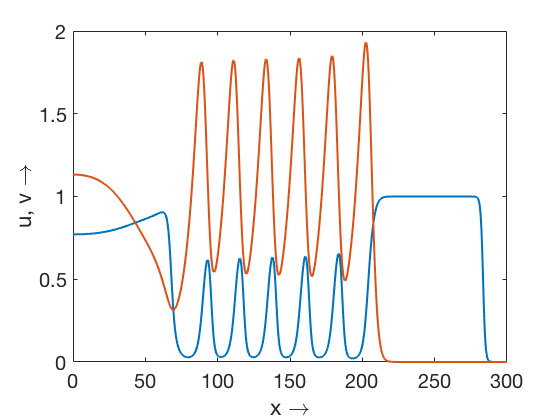}}
		}
		\mbox{\subfigure[]{\includegraphics[width=0.45\textwidth]{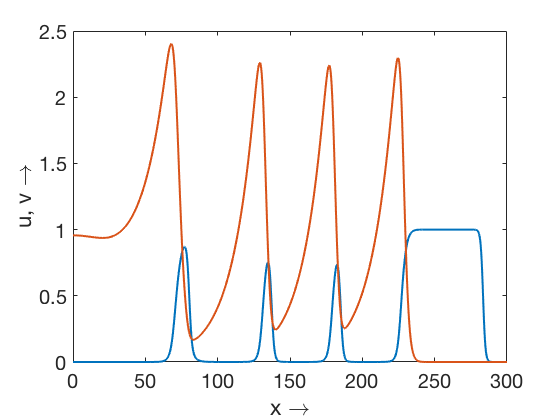}}
		}	
		\caption{Periodic traveling waves for the model (\ref{eq:sptem_onedim}) with $\alpha = 0.5$, $\beta = 0.22$, $\gamma = 3$, $\varepsilon = 1$ and (a) $\delta=0.3$, (b) $\delta=0.2$ and (c) $\delta=0.1$ at the same instant of time $t=160$.}\label{fig:PTW_1d}
	\end{center}
\end{figure}

We mention here that the properties of emerging traveling wave are rather robust with regard to the choice of initial conditions. 
For instance, for a different initial condition as given by
$$u(0,x)\,=\,\left\{\begin{array}{ll}
1, & 0\leq x\leq3\\
0, & 3< x\leq 300 \\
\end{array}\right.,\,\,\,
v(0,x)\,=\,\left\{\begin{array}{ll}
0.2, & 0\leq x\leq2\\
0, & 2< x\leq 300 \\
\end{array}\right.,$$
the emerging periodic traveling waves shown in Fig.~\ref{fig:TW_1d_1}c and Fig.~\ref{fig:PTW_1d}a are qualitatively similar.
Numerical simulation for smaller values of $\delta$ shows an increase in the magnitude and period of the oscillating front as shown in Fig.~\ref{fig:PTW_1d}b-c. For numerical simulations we have chosen a spatial domain of size $[0,300]$, further increase in domain size does not effect the qualitative property of the traveling waves.

To understand the effect of slow-fast time scale on the resulting pattern formation, here we consider the change in periodic traveling wave shown in Fig.~\ref{fig:PTW_1d}a for $\varepsilon<1$. The model (\ref{eq:sptem_onedim}) is simulated for three different values of $\varepsilon$ as mentioned at the caption of Fig.~\ref{fig:PTW_SF_1d}. With the decrease in $\varepsilon$, we observe that the oscillatory wake of the invading predator front separating the predator-free area and the onset of spatiotemporal shrinks and eventually disappears for smaller values of $\varepsilon$, so that dynamical stabilization does not occur. Also the size of predator-free area where prey exists at its carrying capacity increases with the decrease in $\varepsilon$. The size of predator-free patch increases further for $\varepsilon<0.25$.
\begin{figure}[!ht]
	\begin{center}
		\mbox{\subfigure[]{\includegraphics[width=0.45\textwidth]{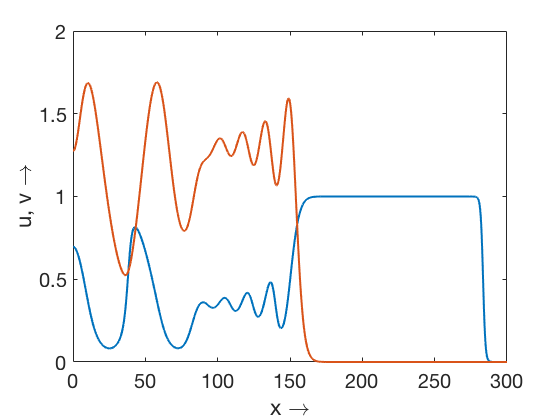}}
			\subfigure[]{\includegraphics[width=0.45\textwidth]{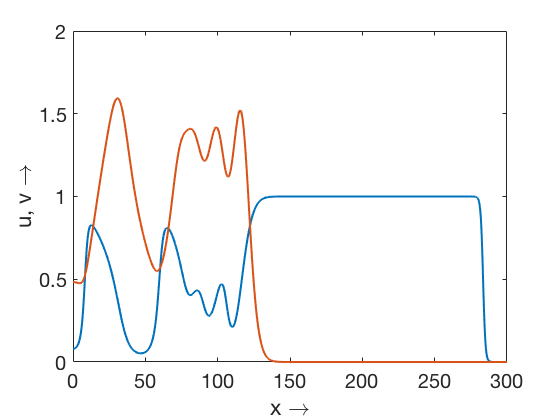}}}
		
		\mbox{\subfigure[]{\includegraphics[width=0.45\textwidth]{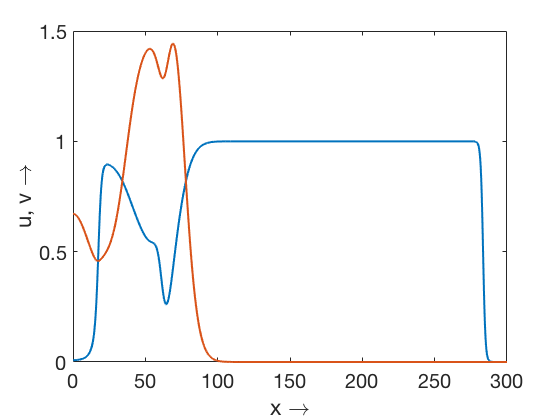}}
		}	
		\caption{Periodic traveling waves for the model (\ref{eq:sptem_onedim}) with $\alpha = 0.5$, $\beta = 0.22$, $\gamma = 3$, $\delta=0.3$ and (a) $\varepsilon = 0.75$, (b) $\varepsilon = 0.5$ and (c) $\varepsilon = 0.25$ at the same instant of time $t=160$.}\label{fig:PTW_SF_1d}
	\end{center}
\end{figure}

\subsection{Patterns in two dimension}

Finally, we consider the spatio-temporal pattern formation over two dimensional spatial domain. The nonlinear reaction-diffusion system (\ref{eq:sptemp_weak}) is solved numerically using five-point finite difference scheme for the Laplacian operator and forward Euler scheme for the temporal part with the initial conditions (\ref{eq:IC1_1}) and (\ref{eq:IC2}). Equal diffusivities are considered throughout {\it i.e}, $d=1$. Two types of initial conditions are used to study the successful invasion and establishment of both species. In \cite{Morozov06} Morozov {\it et.al} used the following initial condition which explains that a small amount of population of both the species are introduced within a small elliptic domain
\begin{equation}\label{eq:IC1_1}
u(0,x,y)\,=\,\left\{
\begin{array}{ll}
u_0, & \frac{(x-x_1)^2}{\Delta_{11}}+\frac{(y-y_1)^2}{\Delta_{12}}\,\le\,1\\
0,  & \text{otherwise}\\
\end{array}\right. \text{,}\ \
v(0,x,y)\,=\,\left\{
\begin{array}{ll}
v_0, & \frac{(x-x_2)^2}{\Delta_{21}}+\frac{(y-y_2)^2}{\Delta_{22}}\,\le\,1\\
0,  & \text{otherwise}\\
\end{array}\right.
\end{equation}
where $u_0$ and $v_0$ measure the initial densities of prey (native) and predator (invasive) species respectively. The other initial condition we consider here, is a small amplitude heterogeneous perturbation from the homogeneous steady states (see \cite{Medvinsky02} for details), is 
\begin{equation}\label{eq:IC2}
\begin{aligned}
u(x,y,0) &=& &u_* - e_1(x-0.1y-225)(x-0.1y-675),&\\
v(x,y,0) &=& &v_* - e_2(x-450)-e_3(y-450),&
\end{aligned}
\end{equation}
where $(u_*,v_*)$ is the homogeneous steady state and for numerical simulation we choose $e_1 = 2\times 10^{-7}$, $e_2 = 3\times 10^{-5}$, and $e_3 = 2\times 10^{-4}$.

\noindent First we simulate the spatio-temporal model with the initial condition (\ref{eq:IC1_1}) in a square domain $L\times L$ with $L=300$, with grid spacing $\Delta x = \Delta y=1$ and time step $\Delta t =0.01$. The simulation results are verified with other choices of $\Delta x$ and $\Delta t$ to ensure that the obtained results are free from numerical artifact. We consider a small elliptic domain within which the prey is at its carrying capacity ($u_0=1$) and a small amount of population ($v_0=0.2$) is introduced. Other parameter values are $x_1 = 153.5$, $y_1 = 145$, $x_2 = 150$, $y_2=150$, $\Delta_{11} = 12.5$, $\Delta_{12} = 12.5$, $\Delta_{21} = 5$, $\Delta_{22}=10$. We simulate the model for a sufficiently long time so that the invading waves can cover the whole domain and hits the domain boundary. Parameter values of $\alpha$, $\beta$, $\gamma$ and $\varepsilon$ are same as mentioned in the previous subsection. We will first check the change in resulting pattern by varying the parameter $\delta$. In Fig \ref{fig:weak_allee_delta_0dot3}, two dimensional spatial distribution of prey population density at two different time points are shown for $\delta=0.3$. We have omitted the spatial distribution of the predator population as they exhibit similar patterns as exhibited by the prey species. Choosing $\delta=0.3$, just below the temporal Hopf bifurcation threshold ($\delta_H= 0.3768$), we observe concentric circular rings as the initial transient pattern which eventually settle down to an interacting spiral pattern once the transients are over. With the advancement of time, the expanding circular rings hit the domain boundary and break into irregular patches. These irregular spiral patches cover the whole domain, and the system dynamics can be identified as interacting spiral chaos (see Fig.~\ref{fig:weak_allee_delta_0dot3}b). Note that the initial invading waves are the periodic traveling waves (Fig.~\ref{fig:weak_allee_delta_0dot3}a) but in large-time we observe irregular spatio-temporal oscillations (Fig.~\ref{fig:weak_allee_delta_0dot3}b). This type of chaotic dynamics persists in the vicinity of temporal Hopf bifurcation threshold ($0.2<\delta<\ \delta_H=0.3768$).

\begin{figure}[ht!]
	\begin{center}
		\mbox{\subfigure[$t=80$]{\includegraphics[width=0.45\textwidth]{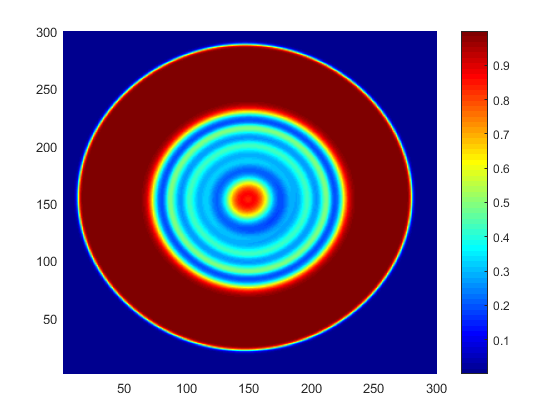}}
			\subfigure[$t=500$]{\includegraphics[width=0.45\textwidth]{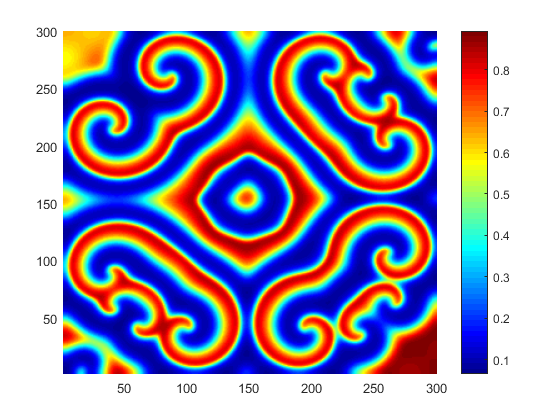}}
		}	
		\caption{Spatio-temporal pattern of prey with initial condition \ref{eq:IC1_1} obtained for $\alpha = 0.5,\ \beta = 0.22,\ \gamma = 3,\ \delta=0.3,\ \varepsilon = 1$\ at different intervals of time. }\label{fig:weak_allee_delta_0dot3}
	\end{center}
\end{figure}

\noindent Keeping other parameters fixed, we further decrease $\delta \  (\le 0.2)$ and find propagating circular rings which are periodic traveling waves. The number of rings, that is the number of population patches within the fixed domain, decreases with the decrease in magnitude of $\delta$. These periodic traveling waves do not break after hitting the boundary and the spatio-temporal dynamics remains unaltered. This result is in agreement with Sherrat et. al \cite{Sherratt97} that the system exhibits oscillatory dynamics as a successful invasion. Periodic traveling fronts for $\delta\le 0.2.$ are shown in Fig.~\ref{fig:weak_allee_delta_0dot1}. 

\begin{figure}[ht!]
	\begin{center}
		\mbox{\subfigure[$\delta=0.2$]{\includegraphics[width=0.45\textwidth]{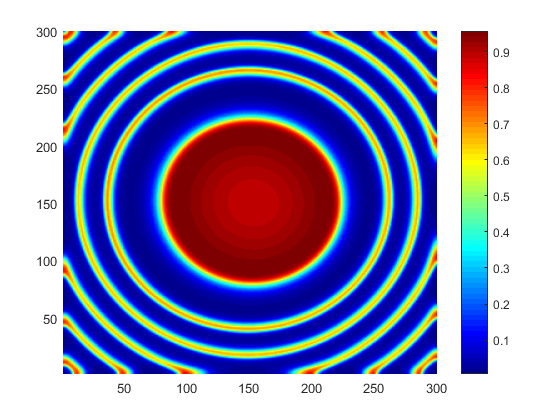}}
			\subfigure[$\delta=0.1$]{\includegraphics[width=0.45\textwidth]{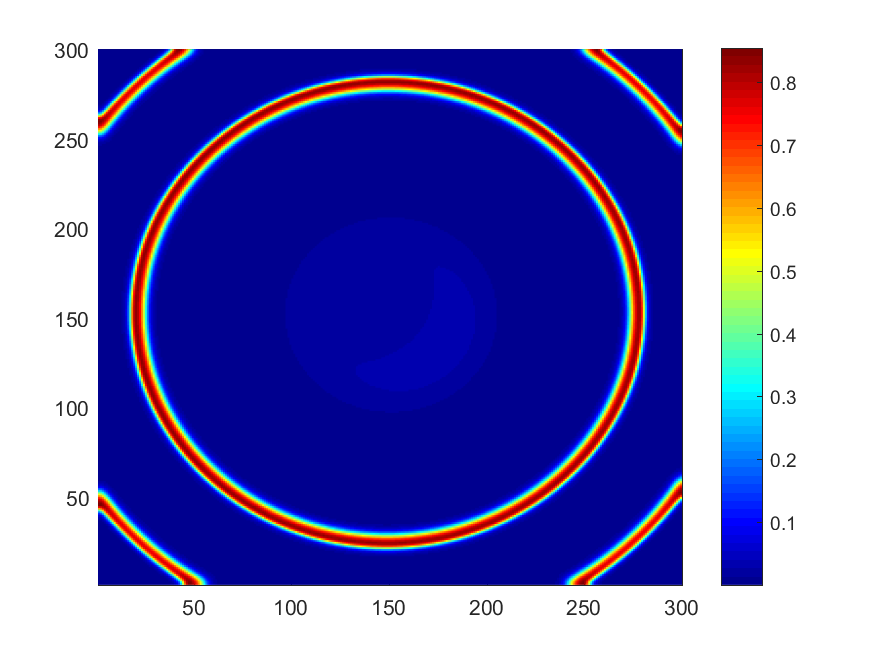}}}
		
		\mbox{\subfigure[$\delta=0.05$]{\includegraphics[width=0.45\textwidth]{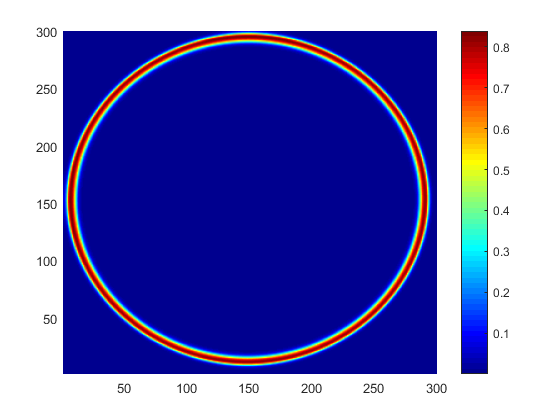}}}
		\caption{Spatio-temporal pattern of prey with initial condition \ref{eq:IC1_1} for $\alpha = 0.5,\ \beta = 0.22,\ \gamma = 3,\ \varepsilon = 1$\ at $t=200$.
		}\label{fig:weak_allee_delta_0dot1}
	\end{center}
\end{figure}

\noindent Now we consider the effect of the slow-fast time scale on the resulting patterns. In the previous subsection, we have explained the reduction of traveling wave speed with the decrease in the magnitude of $\varepsilon$. As a result, the time taken by the predators to invade over the entire domain for $\varepsilon\ll1$ is much longer as compared to $\varepsilon=1$. For $\varepsilon<1$ we find two kinds of distinctive changes in the resulting patterns. The width of the population patches increase (see Fig~\ref{fig:weak_allee_eps2}a), and the spatio-temporal chaotic dynamics changes to periodic temporal oscillation of nearly homogeneous distribution of prey and predator densities (see Fig~\ref{fig:weak_allee_eps2}d). The time evolution of the spatial average of the prey and predator population is analogous to the temporal Canard cycle.

\begin{figure}[ht!]
	\begin{center}
		\mbox{\subfigure[]{\includegraphics[width=0.4\textwidth]{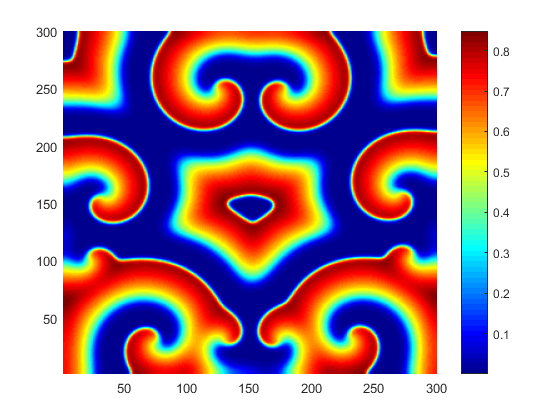}}
			\subfigure[]{\includegraphics[width=0.4\textwidth]{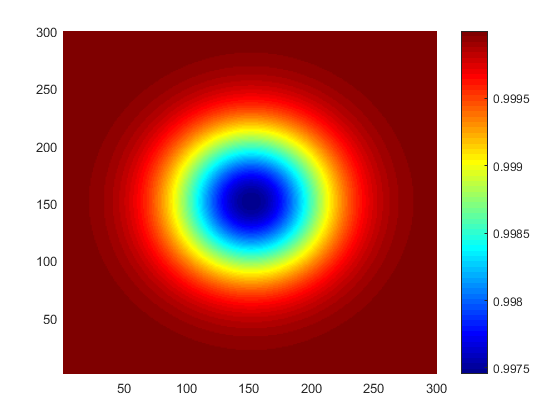}}
		}
		\mbox{\subfigure[]{\includegraphics[width=0.4\textwidth]{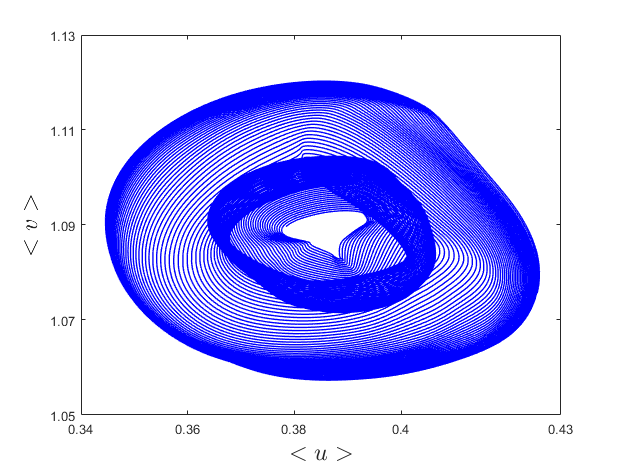}}
			\subfigure[]{\includegraphics[width=0.4\textwidth]{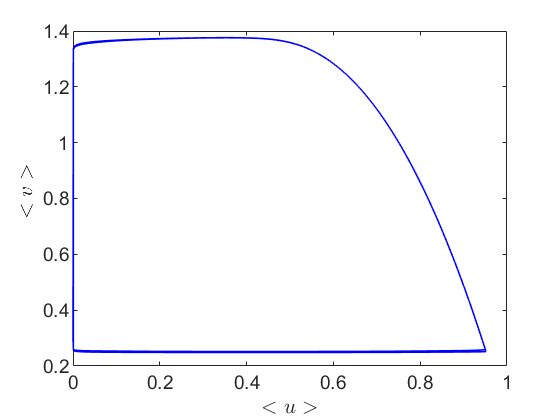}}
			
		}\caption{Spatio-temporal pattern of prey and plots of average spatial of prey and predator population is given w.r.t time for $\alpha = 0.5,\ \beta = 0.22,\ \gamma = 3,\ \delta = 0.3$ for $\varepsilon=0.1$(left) and $\varepsilon=0.01$(right). Upper panel shows the pattern at (a) $t=10000$, (b) $t=5000$; lower panel shows the phase trajectory of spatially averaged densities (c) $t \in [2000, 10000]$, (d) $t \in [5000,10000].$ }\label{fig:weak_allee_eps2}
	\end{center}
\end{figure}
\noindent The choice of the initial condition and the domain size plays an important role in pattern formation. We simulate the system (\ref{eq:sptemp_weak}) with the second kind of initial condition (\ref{eq:IC2}) and over a square domain $L\times L$ with $L=900$. The initial condition indicates a small heterogeneous perturbation from the homogeneous steady state $(u_*,v_*)$. Choosing the same parameter set as Fig.~\ref{fig:weak_allee_delta_0dot3}, initially, we find two spirals rotating about their fixed centers. The regular spirals are destroyed with the advancement of time and the interacting spiral pattern engulfs the whole domain (see Fig.~\ref{fig:IC2_delta0.3}b-c). These patches move, break and form new patches, but qualitatively, the dynamics of the system does not alter with time. 

\begin{figure}[ht!] 
	\centering
	\mbox{\subfigure[$t=360$]{\includegraphics[width=0.45\textwidth]{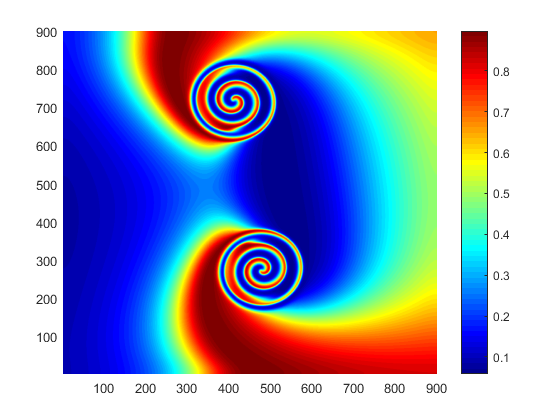}}
		\subfigure[$t=900$]{\includegraphics[width=0.45\textwidth]{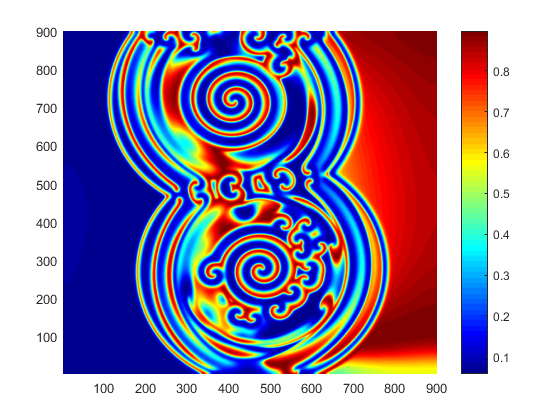}}}
	\mbox{\subfigure[$t=1500$]{\includegraphics[width=0.45\textwidth]{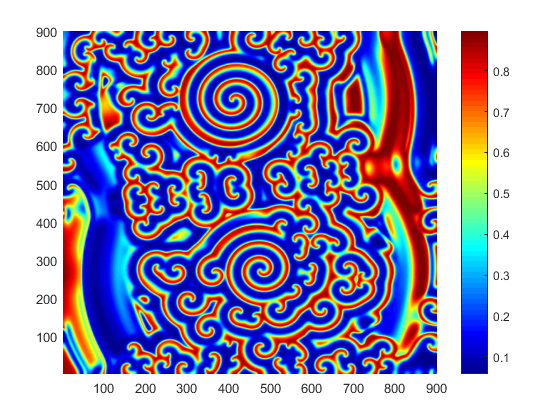}}}
	\caption{Spatio-temporal pattern of prey density for $\alpha = 0.5,\ \beta = 0.22,\ \gamma = 3,\ \delta = 0.3, \ \varepsilon = 1,$ at different time intervals.  }\label{fig:IC2_delta0.3}
\end{figure} 
\noindent Exhaustive numerical simulation indicates that for $\delta$ close to the temporal Hopf bifurcation threshold, the system always exhibits spatio-temporal chaos. The duration and type of transient patterns depend upon the initial condition and the size of the domain. Now considering $\varepsilon < 1$, we found the persistent interacting spirals with thick arms (see Fig.~\ref{fig:IC2_delta0.3_eps0dot1}a for $\varepsilon=0.1$). The regular spiral grows in size and doesn't breakdown even after hitting the boundary when $\varepsilon$ is significantly small, say $\varepsilon=0.01$. The irregularity of the temporal evolution of spatial averages of both the population decreases and moves towards periodic or quasi-periodic oscillation with the decrease in the magnitude of $\varepsilon$. This claim is justified from the time evolution of spatial averages as presented in the lower panel of Fig.~\ref{fig:IC2_delta0.3_eps0dot1}.

\begin{figure}[ht!] 
	\centering
	\mbox{\subfigure[$\varepsilon=1$]{\includegraphics[width=0.3\textwidth]{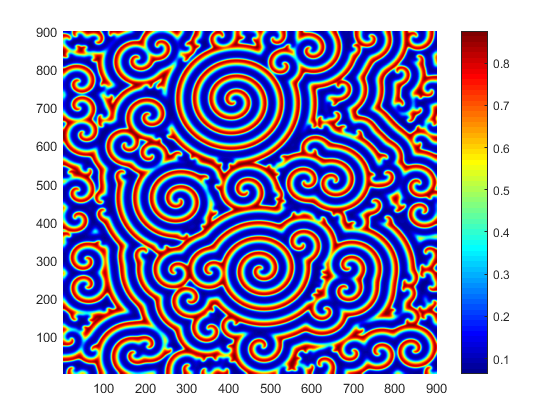}}
		\subfigure[$\varepsilon=0.1$]{\includegraphics[width=0.3\textwidth]{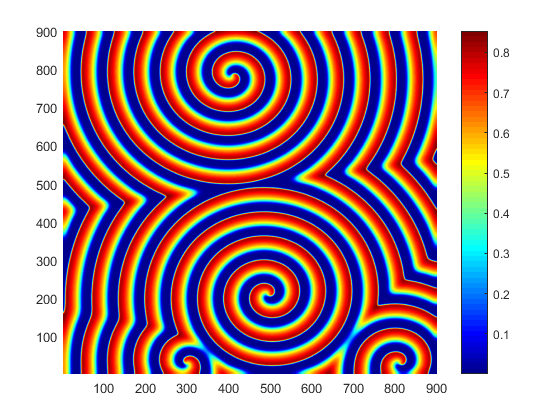}}		
		\subfigure[$\varepsilon=0.01$]{\includegraphics[width=0.3\textwidth]{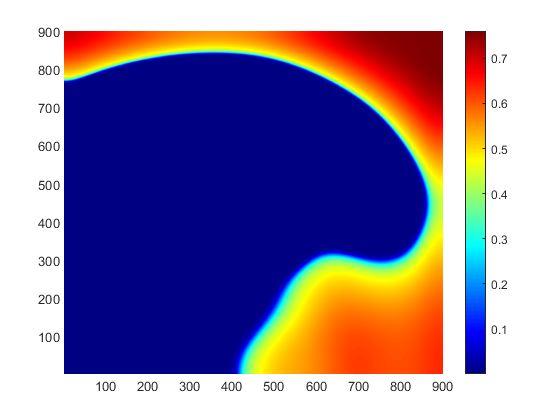}}}
	\mbox{\subfigure[$\varepsilon=1$]{\includegraphics[height = 3.2cm, width=0.3\textwidth]{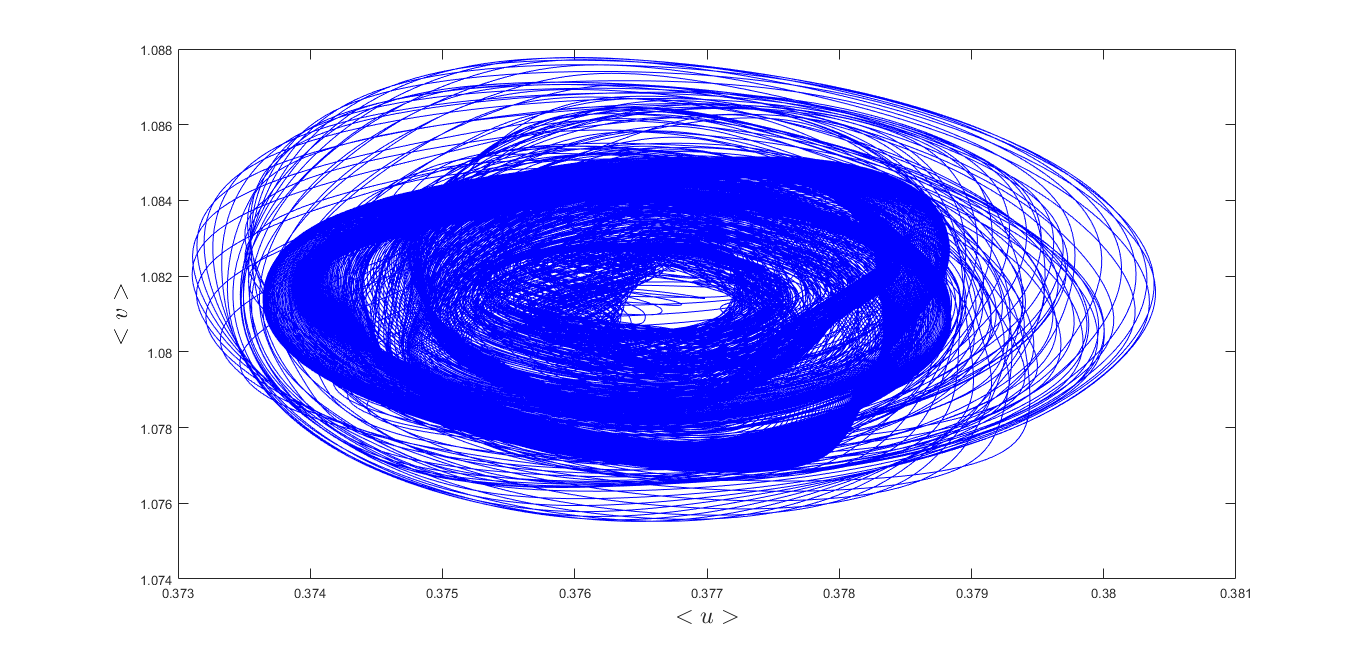}}
		\subfigure[$\varepsilon=0.1$]{\includegraphics[height = 3.2cm, width=0.3\textwidth]{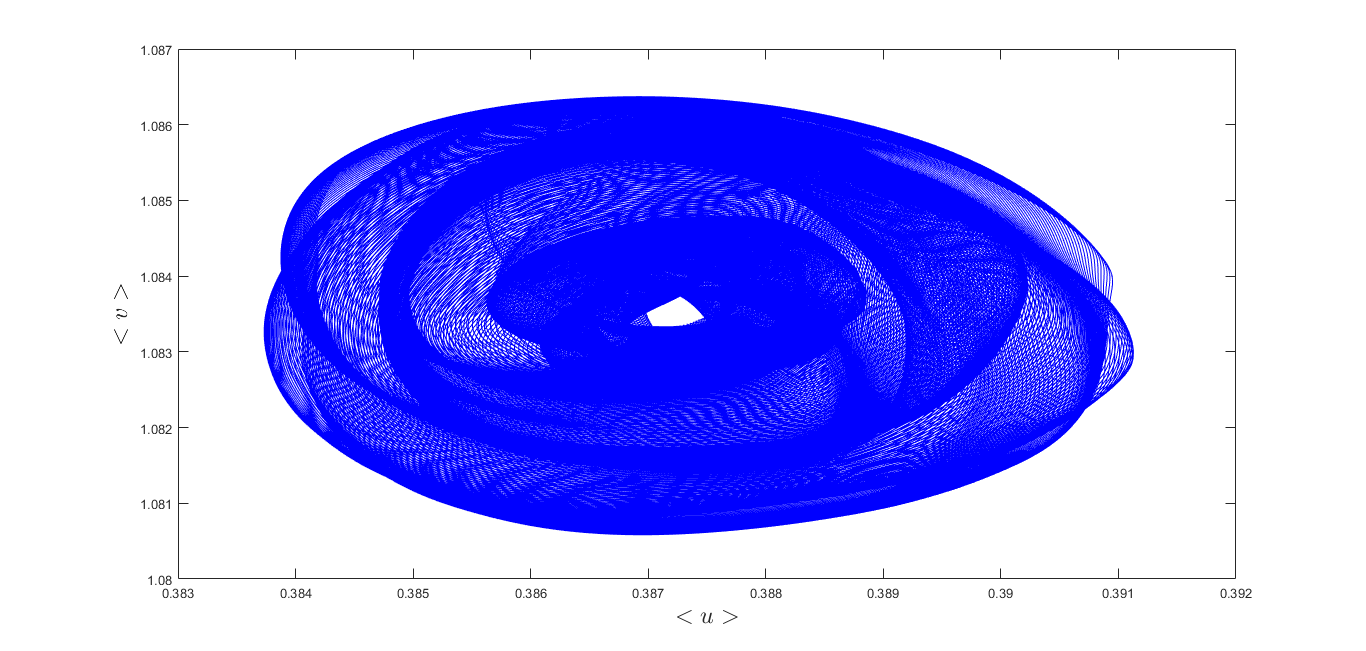}}		
		\subfigure[$\varepsilon=0.01$]{\includegraphics[height = 3.2cm,width=0.3\textwidth]{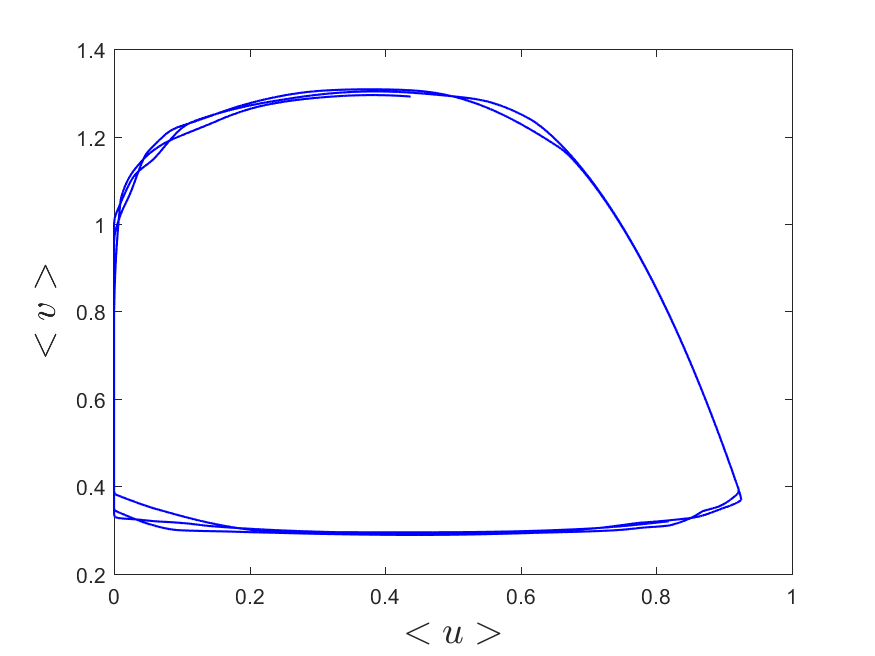}}}
	\caption{Upper panel (a,b,c) represents spatial distribution of prey for $\alpha = 0.5,\ \beta = 0.22,\ \gamma = 3,\ \delta = 0.3$ 
		for different values of $\varepsilon$ (a) $t=12000$, (b) $t=55000$, (c) $t=10000$ ; lower panel (d,e,f) represents phase trajectory of spatially averaged densities obtained after removing initial transients (d) $t\in [3000,12000],$ (e) $t\in [3000,55000]$, (f) $t\in [5000,10000]$.}\label{fig:IC2_delta0.3_eps0dot1}
\end{figure}

\section{Discussion and Conclusions}

Understanding the effects that the existence of multiple time scales may have on the population dynamics of corresponding interacting species, in particular by promoting or hampering their persistence, has been attracting an increasing attention over the last two decades. In particular, some preliminary yet significant work has been done to understand changes in the oscillatory coexistence in the presence of slow-fast time scales \cite{Hek10,Kooi18,Muratori89,Rinaldi92}. However, the effects of the slow-fast dynamics in the spatially explicit systems, e.g.~as given by the corresponding reaction–diffusion equations, remains poorly investigated. This paper aims to bridge this gap, at least partially. As our baseline system, we consider the classical Rosenzweig-McCarthur prey-predator model with the multiplicative weak Allee effect in prey's growth. We pay particular attention to the interplay between the strength of the weak Allee effect (quantified by parameter $0<\beta<1$) and the difference in the time scales for prey and predator (quantified by $\epsilon\le 1$). 

We first provide a detailed slow-fast analysis for the corresponding nonspatial system. In doing that, we have obtained the following results:

\begin{itemize}
	
	\item in the presence of slow-fast dynamics ($\epsilon\ll 1$) and a weak Allee effect, a decrease in the predator mortality may lead to a regime shift where small-amplitudfe oscillations in the populaton abundance change to large-amplitude oscillations (see Fig.~\ref{fig:canard_cycles}). This change becomes more abrupt in case the Allee effect is `not too weak' (i.e.~$\beta$ is sufficiently small), cf.~Figs.~\ref{fig:canard_cycles} and \ref{fig:canard_different_allee_strength}.
	
\end{itemize}

On a more technical side, 
we have derived an asymptotic expansion in $\varepsilon$ for the invariant approximated manifolds and have explained the dynamics of the system near the hyperbolic submanifolds. This theory cannot be extended at non-hyperbolic points. To unravel the complete geometry of the manifolds and their intersection as they pass through the non-hyperbolic points we followed the blow-up technique \cite{Dumortier96,Krupa01A,Krupa01B}. We considered the slow-fast normal form of the model by translating the fold point to the origin. As the transformed system has a singularity at the origin, it is then blown up to a sphere $S^3$ and the trajectories of the blow-up system are mapped on and around the sphere. Using the blow-up analysis we have found the analytical expression for the singular Hopf bifurcation curve $(\mathcal{\lambda_H}(\sqrt{\varepsilon}))$ along which the eigenvalues become singular as $\varepsilon\rightarrow 0$. A particular kind of slow-fast solution known as canards (with or without head) has been found explicitly with the help of Melnikov's distance function in the blow-up space. We have also calculated an analytical expression for the maximal canard curve $(\lambda_c(\sqrt{\varepsilon}))$. Another type of periodic solution is obtained which consists of two concatenated slow and fast flow, known as relaxation oscillation. Analytically, we have proved the existence and uniqueness of the relaxation oscillation cycle using the entry-exit function \cite{Rinaldi92,Wang19AML} and validated our results numerically.

\noindent The difference in the time scale for the growth and decay in prey and predator species capture some interesting feature of respective populations. As the prey population growth takes place over a faster time scale, the predator population remains unchanged during the rapid growth and the decay of prey population. On the other hand, the change in predator population occurs slowly compared to the prey population. This type of growth and decay in two constituent species is observed for steady-state coexistence as well as oscillatory coexistence. The presence of weak Allee effect in prey growth acts as a system saver. The size of the limiting relaxation oscillation cycle is smaller in size when the magnitude of Allee effect is comparatively large (cf. Fig.~\ref{fig:canard_different_allee_strength}). It reduces the chance of extinction (maybe localized) as the periodic attractor remains away from both the axes.

To understand the change in dynamic behavior, we have chosen the predator mortality rate $\delta$, as the bifurcation parameter. For predator mortality rate greater than Hopf threshold the system stabilizes at coexistence steady state, whereas, the system shows oscillatory dynamics for mortality rate less than the threshold. For the model under consideration, the Hopf threshold is independent of the time scale parameter. But the combination of the mortality rate along with time scale parameter ($0<\varepsilon\ll1$) has enormous effect on the nature of the oscillatory coexistence. For a fixed $\varepsilon>0$, as we decrease $\delta$ below the Hopf threshold, we observe a fast transition from small amplitude oscillatory coexistence to relaxation oscillation within an exponentially small range of the parameter $\delta$ via a family of canard cycles (Fig.~\ref{fig:canard_cycles}), known as canard explosion. This type of dynamics has been observed in an ecosystem where the growth rate of the interacting species (resource-consumer type) differ on some orders of magnitude. The reason behind this can be interpreted as: when the predator density is at a maximum level, due to over-consumption the prey population collapses rapidly. Since the predator is specialist, due to the lack of food source the predator population starts decaying slowly and reaches a lower density. It reduces grazing pressure on the prey, as a result the prey population revives leading to a sudden outbreak. Again with increasing food resources, the predator population increases slowly until it reaches a desirable level which can be supported by the abundance of prey and thus the cycle continues. Empirical evidence suggests that this type of oscillatory dynamics is observed in real world, for example in the food web of Canadian boreal forest \cite{Stenseth97} where outbreak of hare population follows a cycle of almost 11 years. In the aquatic ecosystem the seasonal cycle of Daphnia and algae \cite{Scheffer00,Scheffer97} are observed and also in a forest ecosystem where insect pests defoliate the adult trees \cite{Ludwig78}.

We then considered the effect of multiple time scales in one-dimensional and two-dimensional spatial extension of our slow-fast system. 
In the 1D case, the minimum speed of the traveling wave of predator invading into the space already occupied by its prey (observed in case of compact initial conditions) is found analytically, while the patterns emerging in the wake of the front are investigated by means of numerical simulations. In the 2D case, the effect of the interplay between the weak Allee effect and the multiple timescales is studied in simulations.
The following result is worth of highlighting: 

\begin{itemize}
	
	\item in the presence of a weak Allee effect, a decrease in the time scale ratio (i.e.~for $\epsilon\ll 1$) may lead to a regime shift where the pattern becomes correlated across the whole spatial domain resulting in large-amplitude oscillations of spatially average population density; see Fig.~\ref{fig:IC2_delta0.3_eps0dot1}. Since the corresponding trajectory in the phase plane ($<u>,<v>$) comes close to the vertical axis, the immediate ecological implication of this is a likely extinction of prey.
	
\end{itemize}

On a more technical side, 
our main interest was to study how the invasion of the species is taking place and how it is getting affected with the introduction of the time scale parameter. For the values of $\delta$ less than Hopf bifurcation threshold ($\delta<\delta_H$), we find spatio-temporal chaotic patterns. The onset of spatio-temporal chaos and the duration of transient oscillation is completely influenced by the initial distribution of the two species. Fig.~\ref{fig:weak_allee_delta_0dot3} shows periodic traveling waves as transient dynamics before spatio-temporal chaos sets in. However, small amplitude heterogeneous perturbation around the homogeneous steady states reduces the time length for transient dynamics and the system quickly enters spatio-temporal chaotic regime. For $\delta$ significantly less than $\delta_H$ (see Fig.~\ref{fig:weak_allee_delta_0dot1}) we find only periodic traveling waves which indicate that continuous alteration of population patches mimics the temporal dynamics of large amplitude oscillations.

Consideration of time scale difference in the growth rates of prey and predator have some stabilizing effect on the spatio-temporal pattern formation scenario. On one hand, it increases the size of the coexisting population patches over the domain, and on other hand, it drives the spatio-temporal chaotic pattern to periodic or quasi-periodic oscillatory dynamics as shown in Fig.~\ref{fig:IC2_delta0.3_eps0dot1}. One prominent feature can be visualized from the numerical simulation that spatio-temporal chaotic pattern engulf the entire domain of size $900\times900$ at $t=1700,$ for $\varepsilon=1,$ whereas it takes quite a long time for $\varepsilon=0.1.$ The pattern obtained for $\varepsilon=0.1$ in Fig.~\ref{fig:IC2_delta0.3_eps0dot1}(b) is interacting spiral but not chaotic, as it does not show any sensitivity to initial condition. The time evolution of average prey-predator density changes from chaotic nature to quasi-periodic oscillation with the decrease in magnitude of $\varepsilon$ as shown in the lower panel of Fig.~\ref{fig:IC2_delta0.3_eps0dot1}.

In this work, we have considered a prey-predator model with specialist predator and the consumption of prey by the predator follows prey-dependent functional response. As a result, the periodic solution arising through Hopf instability is stable and there is no possibility of global bifurcation through which one or more species can collapse. The large amplitude oscillatory coexistence obtained from the temporal model changes to periodic traveling wave once the individuals are assumed to be distributed over their habitat heterogeneously. The difference in the time-scale for the growth of resource and consumer leads to the establishment of the species over larger patches and the speed of invasive wave reduces with the decrease of $\varepsilon$. The irregularity of population patches (spatio-temporal chaotic patterns) as a part of successful invasion is observed for parameter values close to the Hopf bifurcation threshold and $\varepsilon$ is close to or equal to 1. Decrease in the magnitude of $\varepsilon$ reduces the irregular oscillation but the duration of transient oscillations are enhanced.  The study of spatio-temporal pattern formation with a difference in time scales in the context of ecological systems is quite unexplored in literature. This kind of study can provide a better insight in establishment of the invasive species. More realistic phenomena can be captured if we consider long food chain model with multiple time scales and two species model with generalist predator with predator dependent functional response, which we will study in our future works.


\appendix
\section*{Appendix A} \label{App:GSPT}
Here we will follow geometric singular perturbation technique as given by Fenichel \cite{Fenichel79} to find the analytical expression of locally perturbed invariant manifold $C^1_{\varepsilon}$. Since $v=q(u,\varepsilon)$, from the invariance condition we have $$\dfrac{dv}{dt} = \dfrac{dq(u,\varepsilon)}{du}\dfrac{du}{dt}.$$  Using the explicit expression for $\dfrac{du}{dt}$ and $\dfrac{dv}{dt}$ from (\ref{eq:temp_weak_fast}) we get
\begin{equation} \label{eq:invariance_condition}
\begin{aligned}
\varepsilon q(u,\varepsilon)(u(1-\alpha \delta)-\delta) = u \dfrac{dq(u,\varepsilon)}{du}(\gamma (1-u)(u+\beta)(1+\alpha u)-q(u,\varepsilon)).
\end{aligned} 
\end{equation}
Substituting the asymptotic expansion of $q(u,\varepsilon)$ from  (\ref{eq:invariant-manifold} and assuming $u\ne0$, $\dot{q_0}(u)\ne0,$ we equate $\varepsilon$ free terms from both sides to obtain
\begin{equation}\label{eq:q0}
\begin{aligned}
q_0(u) &= \gamma (1-u)(u+\beta)(1+\alpha u),\\
\end{aligned}
\end{equation}
which is exactly the critical manifold. Now equating the coefficients of $\varepsilon$ from both sides of (\ref{eq:invariance_condition}) we get
\begin{equation}\label{eq:q1}
\begin{aligned}
q_1(u) = \dfrac{q_0(u)(u(1-\alpha \delta)-\delta)}{-u\dot{q}_0(u)}.
\end{aligned}
\end{equation}
Similarly we obtain $q_2(u)$ by equating the coefficients of $\varepsilon^2,$ 
\begin{equation}\label{eq:q2}
\begin{aligned}
q_2(u) = \dfrac{q_1(u)(u(1-\alpha \delta)-\delta)+u q_1\dot{q_1}(u)}{-u\dot{q_0}(u)}.
\end{aligned}
\end{equation}
Proceeding as above we find $q_r(u),\ r=3,4,\cdots$  by equating the coefficients of $\varepsilon^r$ from (\ref{eq:invariance_condition}).
Therefore the second order approximation of the perturbed invariant manifold is given by
$$q(u,\varepsilon)=q_0(u)+\varepsilon q_1(u)+\varepsilon^2 q_2(u),$$ where $q_0,q_1,q_2$ are given in (\ref{eq:q0})-(\ref{eq:q2}).

\section*{Appendix B} \label{App:singular_Hopf}
We apply the blow-up transformation in the slow-fast normal form (\ref{eq:slow-fast_normal_form}) where $$h_1(U,V)=u_*+U, \ \ h_3(U,V)=0,\ \ h_5(U,V)=(v_*+V)(1+\alpha u_*)+Uv_*\alpha,$$ $$h_2(U,V)=-\gamma(-1+6u_*^2\alpha+3u_*(1+\alpha(\beta-1))+\beta-\alpha \beta)-U\gamma(1+\alpha(4u_*+\beta-1)),$$ 
$$ h_4(U,V)=(v_*+V)(1-\alpha\delta_*),\ \
h_6(U,V)=u_*-(1+u_*\alpha)\delta_*,,$$
On chart $K_2$, $\bar{\varepsilon}=1$ so the blow-up transformation as defined in (\ref{eq:blow-up-map}) reduces to:
\begin{equation}\label{eq:blow-up_K2}
\bar{r} = \sqrt{\varepsilon},\ U=\sqrt{\varepsilon}\bar{U},\ V=\varepsilon \bar{V},\ \lambda=\sqrt{\varepsilon}\bar{\lambda}.
\end{equation}
Using the transformation (\ref{eq:blow-up_K2}) we can write the system (\ref{eq:desingularized_system_K2}) by removing the overbars as
\begin{equation}\label{eq:desingularized}
\begin{aligned}
U_t &= -b_1V+b_2U^2+\sqrt{\varepsilon}\mathcal{G}_1(U,V) + O(\sqrt{\varepsilon}(\lambda+\sqrt{\varepsilon})),\\
V_t &= b_3U-b_4\lambda+\sqrt{\varepsilon}\mathcal{G}_2(U,V) + O(\sqrt{\varepsilon}(\lambda+\sqrt{\varepsilon})),\\
\end{aligned}
\end{equation}
where 
\begin{equation} \label{eq:b_i's}
\begin{aligned}
\ & b_1 =u_*,\
\ b_2 =-\gamma(-1+6u_*^2\alpha+3u_*(1+\alpha(\beta-1))+\beta-\alpha \beta),\\
\ & b_3 = v_*(1-\alpha \delta_*),\
\ b_4 = v_*(1+\alpha u_*),
\end{aligned}
\end{equation}
and 
\begin{equation} \label{eq:G1 and G2}
\begin{aligned}
\mathcal{G}_1(U,V) = a_1U-a_2UV+a_3U^3,\ \ \mathcal{G}_2(u,V) = a_4U^2+a_5V.
\end{aligned}
\end{equation}	
Let the equilibrium point of the system (\ref{eq:desingularized}) is $(U_e,V_e)$, $U_e=\dfrac{b_4\lambda}{b_3}+O(2)$ and $V_e=O(2)$ where $O(2):=O(\lambda^2,\lambda\sqrt{\varepsilon},\lambda)$. Linearizing the system about this equilibrium point we have the Jacobian matrix as
\begin{equation}
\mathcal{J}:=
\begin{pmatrix}
2U_eb_2+a_1 \sqrt{\varepsilon}+O(2)&-b_1+O(2)\\
b_3+O(2)&a_5\sqrt{\varepsilon}+O(2)
\end{pmatrix}
\end{equation}
At the Hopf bifurcation we have Trace $\mathcal{J}=0$  which implies
\begin{equation}
\dfrac{2b_2b_4\lambda}{b_3}+\sqrt{\varepsilon}(a_1+a_5)+O(2)=0.
\end{equation}
and applying the blow-down map $\mathcal{\lambda_H}=\lambda\sqrt{\varepsilon}$ we get the singular Hopf bifurcation curve $\mathcal{\lambda_H}(\sqrt{\varepsilon})$ for the slow-fast normal form (\ref{eq:slow-fast_normal_form}) as 
\begin{equation}
\mathcal{\lambda_H}(\sqrt{\varepsilon})=-\dfrac{b_3(a_1+a_5)}{2b_2b_4}\varepsilon+O(\varepsilon^{3/2}).
\end{equation}

\section*{Appendix C} \label{App:maximal_canard}
Here we prove the existence of maximal canard curve and will give an analytical expression for the same. For that we will first prove the following proposition. In chart $K_2$ of the blow up space we consider the desingularized system (\ref{eq:desingularized}) as
\begin{equation}\label{eq:desingularized_r}
\begin{aligned}
U_t &= -b_1V+b_2U^2+r\mathcal{G}_1(U,V) + O(\lambda r,r),\\
V_t &= b_3U-b_4\lambda+r\mathcal{G}_2(U,V) + O(\lambda r,r),\\
r_t &=0,\\
\lambda_t &=0,
\end{aligned}
\end{equation}
where $b_1,\ b_2,\ b_3,\ b_4, \mathcal{G}_1$ and $\mathcal{G}_2$ are computed above (\ref{eq:b_i's}), (\ref{eq:G1 and G2}). The dynamics of the system on the sphere is obtained by putting $r=0$ in (\ref{eq:desingularized_r}) for different values of $\lambda$ in the vicinity of $0$. Thus, by taking $r=0,\ \lambda=0$, the above system is integrable and we have
\begin{equation} \label{eq:riccati eqn}
\begin{aligned}
U_t &= -b_1V+b_2U^2,\\
V_t &= b_3U.
\end{aligned}
\end{equation}
This is a Riccati equation and the solution of this equation helps in proving our main theorem. \\
\textbf{Proposition 1} The solution of the system (\ref{eq:riccati eqn}) is given by $H(U,V) = c,$ where $$H(U,V) = e^{-\dfrac{2b_2}{b_3}V}\Big(\dfrac{b_3}{2}U^2-\dfrac{b_1b_3^2}{4b_2^2}-\dfrac{b_1b_3}{2b_2}V\Big)$$ and
$$\dfrac{dU}{dt} = -e^{\dfrac{2b_2}{b_3}V}\dfrac{\partial H}{\partial V},$$
\begin{equation} \label{eq:H(u,v)}
\begin{aligned}
\dfrac{dV}{dt} = e^{\dfrac{2b_2}{b_3}V}\dfrac{\partial H}{\partial U}.
\end{aligned}
\end{equation} 
\begin{proof}
	We can write the above Riccati system  (\ref{eq:riccati eqn}) as 
	\begin{equation}
	\dfrac{dV}{dU}=\dfrac{b_3U}{-b_1V+b_2U^2}
	\end{equation}
	where the integrating factor is $e^{-\dfrac{2b_2}{b_3}V}$. Multiplying both sides with the I.F and integrating we get
	$$e^{-\dfrac{2b_2}{b_3}V} \Big(U^2-\dfrac{b_1}{b_2}V-\dfrac{b_1b_3}{2b_2^2}\Big)=c_0.$$
	Multiplying with $\dfrac{b_3}{2}$ we obtain the solution of the system( \ref{eq:riccati eqn}) as
	$$e^{-\dfrac{2b_2}{b_3}V}\Big(\dfrac{b_3}{2}U^2-\dfrac{b_1b_3^2}{4b_2^2}-\dfrac{b_1b_3}{2b_2}V\Big) = c,$$ where $c=c_0\dfrac{b_3}{2}$ is a constant. The solution determined by $c=0$ is a parabola of the form $$U^2=\dfrac{b_1b_3}{2b_2^2}+\dfrac{b_1}{b_2}V.$$ 
\end{proof}
\textit{Proof of theorem 4.2:}
We write the solution of the system (\ref{eq:riccati eqn}) in the parametric form 
\begin{equation}\label{eq:parametric_sol}
\eta(t) = (U(t),V(t))= \Big(t,\dfrac{b_2}{b_1}t^2-\dfrac{b_3}{2b_2}\Big),\ t\in \mathbb{R}
\end{equation}
For $\varepsilon=0$ the attracting and repelling submanifolds of the critical manifold $\mathcal{M}^1_0$ intersect along the equator of the blow-up space $S^3$. From Fenichel's theory, for $\varepsilon>0$ there exist invariant perturbed attracting $(\mathcal{M}_{\varepsilon}^{1,a})$ and repelling submanifold $(\mathcal{M}_{\varepsilon}^{1,r})$. Along the curve (\ref{eq:parametric_sol}), the attracting $(\mathcal{M}_{\varepsilon}^{1,a})$ and repelling $(\mathcal{M}_{\varepsilon}^{1,r})$ invariant submanifolds in the blow-up space intersect and the solution trajectory lying in that intersection is called maximal canard. We use Melnikov function to calculate the distance between these invariant manifolds \cite{Krupa01A}, \cite{Kuehn15} which is given by
\begin{equation}\label{eq:Melnikov_distance}
D_{r,\lambda} = d_r r + d_{\lambda} \lambda + O(r^2),
\end{equation}
where 
\begin{equation}\label{eq:melnikov_formula}
\begin{aligned}
d_r = \int_{-\infty}^{\infty}\nabla H(\eta(t))^T\mathcal{G}(\eta(t))dt,\\
d_{\lambda} = \int_{-\infty}^{\infty}\nabla H(\eta(t))^T\begin{pmatrix}
0\\-b_4
\end{pmatrix}dt,
\end{aligned}
\end{equation}
where $\mathcal{G},\ H$ and $b_4$ are defined in  (\ref{eq:G1 and G2}), (\ref{eq:H(u,v)}) and (\ref{eq:b_i's}) respectively. The distance between the submanifolds $\mathcal{M}_{\varepsilon}^{1,a}$ and $\mathcal{M}_{\varepsilon}^{1,r}$ is given by the eq.~(\ref{eq:Melnikov_distance}). And since the maximal canard lie in the intersection of these manifolds, so we must have $D_{r,\lambda}=0$.
For that we now calculate the Melnikov-type integrals $d_r$ and $d_{\lambda}$  (\ref{eq:parametric_sol}
) and (\ref{eq:melnikov_formula}). 
Therefore,
\begin{equation}
\begin{aligned}
d_r &= \int_{-\infty}^{\infty}\Big[(a_1U-a_2UV+a_3U^3 )\dfrac{\partial H(\eta(t))}{\partial U}+(a_4U^2+a_5V)\dfrac{\partial H(\eta(t))}{\partial V}\Big]dt\\
&= \int_{-\infty}^{\infty}e^{-\dfrac{2b_2}{b_3}V}\Big[(a_1U-a_2UV+a_3U^3 )b_3U+(a_4U^2+a_5V)(b_1V-b_2U^2)\Big]dt\\
&= e\int_{-\infty}^{\infty}e^{-A_4t^2}\Big(A_1t^4+A_2t^2+A_3\Big)dt
\end{aligned}
\end{equation}
where,
$$A_1 = a_3b_3-\dfrac{a_2b_2b_3}{b_1},\ A_2 = a_1b_3+ \dfrac{a_2b_3^2}{2b_2}-\dfrac{a_4b_1b_3}{2b_2}-\dfrac{a_5b_3}{2},\ A_3 = \dfrac{a_5b_1b_3^2}{4b_2^2},\ A_4 = \dfrac{2b_2^2}{b_1b_3}.$$
Now substituting $z=t^2$ and by repeated integration by parts we obtain
\begin{equation}
\begin{aligned}
d_r = e\Big(\dfrac{3A_1}{4A_4^2}+\dfrac{A_2}{2A_4}+A_3\Big)\int_{-\infty}^{\infty}e^{-A_4t^2}dt,
\end{aligned}
\end{equation}
and 
\begin{equation}
\begin{aligned}
d_{\lambda} &= -\int_{-\infty}^{\infty}b_4\dfrac{\partial H}{\partial V}dt\\
&= b_4\int_{-\infty}^{\infty}e^{-\dfrac{2b_2}{b_3}V}(-b_1V+b_2U^2)dt\\
&= e A_5\int_{-\infty}^{\infty}e^{-A_4t^2}dt,
\end{aligned}
\end{equation}
where $A_5 = \dfrac{b_1b_3b_4}{2b_2}$. Since $d_{\lambda}\ne0$ therefore using implicit function theorem we can explicitly solve for $\lambda$ from (\ref{eq:Melnikov_distance}) 
\begin{equation}
\begin{aligned}
\lambda(r) &=-\dfrac{d_r}{d_\lambda}r + O(r^2)
= -\dfrac{1}{A_5}\Big(\dfrac{3A_1}{4A_4^2}+\dfrac{A_2}{2A_4}+A_3\Big)r + O(r^2).
\end{aligned}
\end{equation}
Now using blow down map $\lambda_c=\lambda\sqrt{\varepsilon}$ we obtain the maximal canard curve for the slow-fast normal form (\ref{eq:slow-fast_normal_form}).
\begin{equation}
\begin{aligned}
\lambda_c(\sqrt{\varepsilon}) &= -\dfrac{1}{A_5}\Big(\dfrac{3A_1}{4A_4^2}+\dfrac{A_2}{2A_4}+A_3\Big)\varepsilon + O(\varepsilon^{3/2}).
\end{aligned}
\end{equation}

\section*{Appendix D} \label{App:Entry-exit}
Here we prove the existence of a unique attracting limit cycle called relaxation oscillation. To study the dynamics of the system (\ref{eq:entry_exit}) we define two section of the flow as
\begin{equation*}
\begin{aligned}
& \Delta^{in}=\{(u_+,v):u_+<<u_{max}, v\in (v_1-\rho,v_1+\rho)\},\\
& \Delta^{out}=\{(u_+,v):u_+<<u_{max}, v\in (v_0-\rho^2,v_0+\rho^2)\},
\end{aligned}
\end{equation*}
where $u_{max},\ v_1,\ v_0$ are defined in subsection \ref{subsec:entry_exit} and $\rho$ is sufficiently small positive number.

\noindent Let us define a return map $\Pi:\Delta^{in}\rightarrow \Delta^{in}$ which is a composition of two maps $$\Phi:\Delta^{in}\rightarrow\Delta^{out},\ \  \Psi:\Delta^{out}\rightarrow \Delta^{in},$$ such that $\Pi = \Psi \circ \Phi$. Let us fix $\varepsilon>0$ and we take a point $(u_+,v_+)$ on the section $\Delta^{in}$. Now we consider a trajectory of the system (\ref{eq:entry_exit}) starting from the initial point $(u_+,v_+)$. From the analysis of the entry-exit function we can say that this trajectory will be attracted to $V_+$ and will leave $V_-$ at point $(0,p(v_+)),$ where $p$ is the entry-exit function. The trajectory then jumps into the section $\Delta^{out}$ at the point $(u_+,p(v_+)).$ Thus, the map $\Phi$ is defined with the help of entry-exit function as $\Phi(u_+,v_+)=(u_+,p(v_+)).$\\
Now to study the map $\Psi$ we consider two trajectories $\gamma_{\varepsilon}^1, \gamma_{\varepsilon}^2$ starting from the section $\Delta^{out}$. These trajectories get attracted toward $C_{\varepsilon}^{1,a}$ where the slow flow is given by $\dfrac{du}{d\tau}=\dfrac{g(u,q(u,\varepsilon))}{\dot{q}(u,\varepsilon)}.$

\noindent They follow the slow perturbed manifold until the vicinity of the fold point where they contract exponentially toward each other \cite{Wang19AML} and jump into $\Delta^{in}.$ From Theorem 2.1 of \cite{Krupa01A} we have that the map $\Pi$ is a contraction. Using contraction mapping theorem we conclude that $\Pi$ has a unique fixed point which gives rise to a unique relaxation oscillation cycle $\gamma_{\varepsilon}$. Further from Fenichel's theory we infer that $\gamma_{\varepsilon}$ converges to $\gamma_0$ as $\varepsilon \rightarrow 0.$

\noindent Now for the parameter values $\alpha=0.5,\ \beta=0.2,\ \delta=0.3,$ the unique attracting cycle $\gamma_{\varepsilon}$ for $\varepsilon=0.1$, is shown below which converges to $\gamma_0$ as $\varepsilon \rightarrow 0.$
\begin{figure}[!ht]
	\begin{center}
		\mbox{\includegraphics[width=0.6\textwidth]{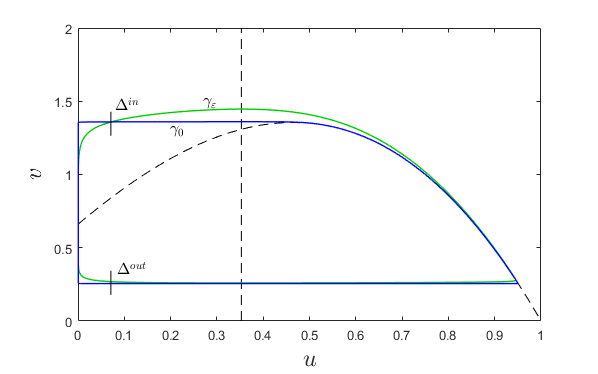}}\caption{Singular trajectory $\gamma_0$ (blue) and unique attracting limit cycle $\gamma_{\varepsilon}$ for $\varepsilon =0.1$ (green) for $\alpha=0.5,\ \beta=0.2,\ \delta=0.3.$} \label{fig:entry-exit}
	\end{center}
\end{figure}

\end{document}